\newtheorem{theorem}{Theorem}
\newtheorem{lemma}{Lemma}
\newtheorem{proposition}{Proposition}
\newtheorem{definition}{Definition}
\newtheorem{assumption}{Assumption}
\newtheorem{remark}{Remark}
\begin{document}

\title{Distributed and Localized Model Predictive Control. \\ Part I: Synthesis and Implementation}

\author{Carmen~Amo~Alonso \thanks{C. Amo Alonso is a graduate student in the Computing and Mathematical Sciences Department at California Institute of Technology, Pasadena, CA. {\tt\small camoalon@caltech.edu}}
\and Jing Shuang (Lisa) Li \thanks{J.S. Li is a graduate student in the Computing and Mathematical Sciences Department at California Institute of Technology, Pasadena, CA. {\tt\small jsli@caltech.edu}}
\and James~Anderson \thanks{ J. Anderson is with the Department of Electrical Engineering and the Data Science Institute at Columbia University, New York, NY. {\tt\small james.andersosn@columbia.edu}}
\and Nikolai~Matni  \thanks{N. Matni is an Assistant Professor with the Department of Electrical and Systems Engineering at the University of Pennsylvania, Philadelphia, PA.    {\tt\small nmatni@seas.upenn.edu}}}

\markboth{Submitted to IEEE Transactions on Control of Network Systems,~Vol.~\textbf{??}, No.~\textbf{??}, date\textbf{??}}
{Amo Alonso \MakeLowercase{\textit{et al.}}: Distributed and Localized Model Predictive Control: Synthesis and Implementation}

\maketitle

\begin{abstract}

The increasing presence of large-scale distributed systems highlights the need for scalable control strategies where only local communication is required. Moreover, in safety-critical systems it is imperative that such control strategies handle constraints in the presence of disturbances. In response to this need, we present the Distributed and Localized Model Predictive Control (DLMPC) algorithm for large-scale linear systems. DLMPC is a distributed \emph{closed-loop} model predictive control (MPC) scheme wherein only local state and model information needs to be exchanged between subsystems for the computation and implementation of control actions. We use the System Level Synthesis (SLS) framework to reformulate the centralized MPC problem, and show that this allows us to naturally impose localized communication constraints between sub-controllers. The structure of the resulting problem can be exploited to develop an Alternating Direction Method of Multipliers (ADMM) based algorithm that allows for distributed and localized computation of closed-loop control policies. We demonstrate that computational complexity of the subproblems solved by each subsystem in DLMPC is independent of the size of the global system. DLMPC is the first MPC algorithm that allows for the scalable computation and implementation of distributed closed-loop control policies, and deals with additive disturbances. In our companion paper, we show that this approach enjoys recursive feasibility and asymptotic stability.

\end{abstract}

\section{Introduction}

Model Predictive Control (MPC) has seen widespread success across many applications. However, the need to control increasingly large-scale, distributed, and networked systems has limited its applicability. Large-scale distributed systems are often impossible to control with a centralized controller, and even when such a centralized controller can be implemented, the high computational demand of MPC renders it impractical. Thus, efforts have been made to develop \emph{distributed} MPC (DMPC) algorithms, wherein sub-controllers solve a local optimization problem, and potentially coordinate with other sub-controllers in the network.  

\textbf{Prior work:} The majority of DMPC research has focused on cooperative setting, where sub-controllers exchange state and control action information in order to coordinate their behavior so as to optimize a global objective, typically through distributed optimization \cite{venkat_distributed_2008,zheng_networked_2013,giselsson_accelerated_2013,conte_distributed_2016,jalal_limited-communication_2017,wang_distributed_2015, venkat2005stability,sturz_distributed_2020}. Most of these approaches rely on nominal open-loop approaches, and  while nominal MPC enjoys some intrinsic robustness \cite{limonmarruedo_stability_2002}, the resulting closed-loop can be destabilized by an arbitrary small disturbance \cite{grimm_examples_2004}. Thus, to maintain robustness in the presence of additive disturbances, \emph{closed-loop} policies are desirable.

Two main closed-loop MPC approaches are used. The first approach, which we use here, is to compute \emph{dynamic} structured closed-loop policies using suitable parameterizations. This strategy was introduced in the by Goulart et al. \cite{goulart_optimization_2006}, and showed that disturbance-based parameterizations allow for distributed policies to be synthesized via convex optimization. A more recent method exploits Quadratic Invariance \cite{rotkowitz_characterization_2006} and the Youla parameterization \cite{furieri_robust_2017}. These methods allow distributed closed-loop control policies to be synthesized via convex optimization; however, the resulting optimization problem lacks structure to be amenable to distributed optimization techniques, and thus these methods do not scale to large systems. Similarly, recent works exploit the System Level Synthesis (SLS) parametrization to design robust MPC controllers \cite{sieber_system_2021,chen_robust_2020}; however, it is unclear how these can be applied in the distributed setting. The alternative approach is to extend centralized methods (i.e. constraint tightening, tube MPC). This is computationally efficient but requires pre-computed stabilizing controllers, and often relies on strong assumptions, such as the existence of a \emph{static structured} stabilizing controller (as in \cite{conte_robust_2013}) which can be NP-hard to compute \cite{blondel_np_1997}, or decoupled subsystems (as in \cite{richards_robust_2007}).

Overall, though many closed-loop formulations exist, they rely on strong assumptions and/or are unsuited for distributed computation. We seek a closed-loop DMPC algorithm that (i) computes structured feedback policies via convex optimization and (ii) can be solved at scale via distributed optimization. To the best of our knowledge, no such algorithm exists.

\textbf{Contributions:}  In this paper we address this gap and present the Distributed Localized MPC (DLMPC) algorithm for linear time-invariant systems, which allows for the distributed computation of structured feedback policies with recursive feasibility and asymptotic stability guarantees. We leverage the SLS framework \cite{wang_system_2019,wang_separable_2018,anderson_system_2019} to define a novel parameterization of distributed closed-loop MPC policies such that the resulting synthesis problem is \emph{both convex and structured}, allowing for the natural use of distributed optimization techniques. Thanks to the nature of the SLS parametrization, this approach deals with disturbances in a straightforward manner with no additional assumptions. We show that by exploiting the sparsity of the underlying distributed system and resulting closed-loop system, as well as the separability properties \cite{wang_separable_2018} of commonly used objective functions and constraints, we are able to distribute the computation via ADMM \cite{boyd_distributed_2010}, thus allowing the online computation of closed-loop MPC policies to be carried out in a scalable localized manner. Our results apply to the nominal case (as presented in \cite{amo_alonso_distributed_2020}) as well as the robust case, and provide a unifying algorithm that applies to all cases and does not rely on simplifying approximations. In the resulting implementation each sub-controller solves a low-dimensional optimization problem requiring only local communication of state and model information. Through numerical experiments, we validate these results and confirm that the complexity of the subproblems solved at each subsystem scales as $O(1)$ relative to the state dimension of the full system for both the nominal as well as the robust case.

\textbf{Paper structure:} In \S II we present the problem formulation.  In \S III, we introduce the SLS framework and show how to recast MPC as an optimization problem over system responses for nominal and robust case.  We also highlight the differences between SLS and the disturbance based feedback method proposed in \cite{goulart_optimization_2006}. In \S IV we define and analyze the DLMPC algorithm. In \S V, we present a numerical experiment and we end in \S VI with conclusions and directions of future work. In our companion paper \cite{amoalonso_guarantees_2021}, we provide theoretical guarantees for convergence, recursive feasibility and asymptotic stability of this approach, and extend the algorithm to deal with coupling through cost and constraints.

\textbf{Notation:} Lower-case and upper-case letters such as $x$ and $A$ denote vectors and matrices respectively, although lower-case letters might also be used for scalars or functions (the distinction will be apparent from the context). Bracketed indices denote time-step of the real system, i.e., the system input is $u(t)$ at time $t$, not to be confused with $x_t$ which denotes the predicted state $x$ at time $t$. Superscripted variables, e.g. $x^k$, correspond to the value of $x$ at the $k^{th}$ iteration of a given algorithm. Square bracket notation, i.e., $[x]_{i}$ denotes the components of $x$ corresponding to subsystem $i$. Calligraphic letters such as $\mathcal{S}$ denote sets, and lowercase script letters such as $\mathfrak{c}$ denote a subset of $\mathbb{Z}^{+}$, e.g. $\mathfrak{c}=\left\{1,...,n\right\}\subset\mathbb{Z}^{+}$.  Boldface lower and upper case letters such as $\mathbf{x}$ and $\mathbf{K}$ denote finite horizon signals and block lower triangular (causal) operators, respectively:
\begin{equation*} 
\mathbf{x}=\left[\begin{array}{c} x_{0}\\x_{1}\\\vdots\\x_{T}\end{array}\right], ~
K =   { {\scriptscriptstyle{\left[\begin{array}{cccc}K_{0}[0] & & & \\ K_{1}[1] & K_{1}[0] & & \\ \vdots & \ddots & \ddots & \\ K_{T}[T] & \dots & K_{T}[1] & K_{T}[0] \end{array}\right]}}},
\end{equation*}
where each $x_i$ is an $n$-dimensional vector, and each $K_{i}[j]$ represents the value of matrix $K$ at the $j^\text{th}$ time-step computed at time $i$. $\mathbf{K}(\mathfrak{r},\mathfrak{c})$ denotes the submatrix of $\mathbf{K}$ composed of $\mathfrak{r}$ rows and $\mathfrak{c}$ columns respectively. We denote the block columns of  $\mathbf K$ by $\mathbf K\{1\}$,...,$\mathbf K\{T\}$, i.e. $\mathbf K\left\{1\right\}:=[K_{0}[0]^{\mathsf{T}}\ \dots\ K_{T}[T]^{\mathsf{T}}]^{\mathsf{T}}$, and we use $:$ to indicate the range of columns, i.e. $\mathbf K\left\{2:T\right\}$ contains the block columns from the second to the last. 

\section{Problem Formulation}

Consider a discrete-time linear time invariant (LTI) system with dynamics:
\begin{equation} \label{eqn:LTV_system}
x(t+1) = Ax(t)+Bu(t)+w(t),
\end{equation}
where $x(t)\in\mathbb{R}^{n}$ is the state, $u(t)\in\mathbb{R}^{p}$ is the control input, and $w(t)\in\mathcal W \subset \mathbb{R}^{n}$ is an exogenous disturbance. The system is composed of $N$ interconnected subsystems (each having one or more states), so the state, control, and disturbance inputs can be suitably partitioned as $[x]_i$, $[u]_i$, and $[w]_i$ for each subsystem $i$, consequently inducing a compatible block structure $[A]_{ij}$, $[B]_{ij}$ in the system matrices $(A,B)$. We model the interconnection topology of the system as a time-invariant unweighted directed graph $\mathcal{G}_{(A,B)}(\mathcal E,\mathcal V)$, where each subsystem $i$ is identified with a vertex $v_{i}\in \mathcal V$ and an edge $e_{ij}\in \mathcal E$ exists whenever $[A]_{ij}\neq 0$ or $[B]_{ij}\neq 0$.

We study the case where the control input is a model predictive controller optimizing a nominal objective and is subject to constraints on the state and the input. As is standard, at each time step $\tau$ the controller solves an optimal control problem over a finite prediction horizon of length $T$ using the current state as the initial condition:
\begin{align} \label{eqn:MPC}
& \underset{{x}_{t},u_{t}, \gamma_t}{\text{min}} &  &\sum_{t=0}^{T-1}f_{t}(x_{t},u_{t})+f_{T}(x_{T})\\
& \ \text{s.t.} &  &\begin{aligned} \nonumber
    &x_{0} = x(\tau),\ x_{t+1} = Ax_{t}+Bu_{t}+w_t, \\
    &x_{T}\in\mathcal{X}_{T},\, x_{t}\in\mathcal{X}_{t},\, u_{t}\in\mathcal{U}_{t}\ \forall w_t\in\mathcal{W}_t, \\
    &u_{t} = \gamma_t(x_{0:t},u_{0:t-1}),\ t=0,...,T-1,
\end{aligned}
\end{align}
where $f_t(\cdot,\cdot)$ and $f_T(\cdot)$ are closed, proper, and convex and $\gamma_t(\cdot)$ is a measurable function of its arguments. In the nominal case, i.e., $w_t=0\ \forall t$, $\mathcal{X}_t$ and $\mathcal{U}_t$ are closed and convex sets containing the origin. When noise is present, we restrict ourselves to polytopic sets: $\mathcal{X}_t:=\{x:H_{x,t}x\leq h_{x,t}\}$ and $\mathcal{U}_t:=\{u:H_{u,t}u\leq h_{u,t}\}$, and $\mathcal{W}_t$ is considered to be norm-bounded or a polytopic set. 

Our goal is to define an algorithm that allows us to solve the MPC problem \eqref{eqn:MPC} in a distributed manner while respecting local communication constraints. To achieve this goal, we impose that information exchange - as defined by the graph $\mathcal G_{(A,B)}(\mathcal E,\mathcal V)$ - is localized to a subset of neighboring sub-controllers. In particular, we use the notion of a $d$-local information exchange constraint \cite{wang_separable_2018} to be one that restricts sub-controllers to exchange their state and control actions with neighbors at most $d$-hops away, as measured by the communication topology $\mathcal{G}_{(A,B)}$. This notion is captured by the $d$-outgoing and $d$-incoming sets of subsystem.
\begin{definition}{\label{def: in_out set}}
For a graph $\mathcal{G}(\mathcal V, \mathcal E)$, the \textit{d-outgoing set} of subsystem $i$ is $\textbf{out}_{i}(d) := \left\{v_{j}\ |\  \textbf{dist}(v_{i} \rightarrow v_{j} ) \leq d\in\mathbb{N} \right\}$. The \textit{d-incoming set} of subsystem $i$ is $\textbf{in}_{i}(d) := \left\{v_{j}\ |\ \textbf{dist}(v_{j} \rightarrow v_{i} ) \leq d\in\mathbb{N} \right\}$.  Note that $v_i \in \textbf{out}_{i}(d)\cap \textbf{in}_{i}(d)$ for all $d\geq 0$, and $ \textbf{dist}(v_{i} \rightarrow v_{j} )$ denotes the distance between $v_i$ and $v_j$ i.e., the number of edges in the shortest path connecting subsystems $i$ and $j$.
\end{definition}

Hence, we can enforce a $d$-local information exchange constraint on the MPC problem \eqref{eqn:MPC} -- where the size of the local neighborhood $d$ is a \emph{design parameter} -- by imposing that each sub-controllers policy respects 
\begin{equation} \label{eqn:comms}
\begin{aligned} 
    [u_t]_i = \gamma^i_{t}([x_{0:t}]_{j\in \textbf{in}_i(d)},[u_{0:t-1}]_{j\in \textbf{in}_i(d)}, [A]_{j,k \in \textbf{in}_i(d)}, [B]_{j,k\in  \textbf{in}_i(d)})
\end{aligned} 
\end{equation}
for all $t=0,\dots,T$ and $i=1,\dots,N$, where $\gamma_{i,t}$ is a measurable function of its arguments. This means that the closed-loop control policy at sub-controller $i$ can be computed using only states, control actions, and system models collected from $d$-hop incoming neighbors of subsystem $i$ in the communication topology $\mathcal{G}_{(A,B)}$. Given such an interconnection topology, suitable structural compatibility assumptions between the cost function and state, input, and information exchange constraints are necessary for both the synthesis and implementation of a localized control action at each subsystem.

\begin{assumption}{\label{assump: locality}}
In formulation \eqref{eqn:MPC} the objective function $f_{t}$  is such that $f_{t}(x,u)=\sum f_{t}^i([x]_{j\in\textbf{in}_{i}(d)},[u]_{j\in\textbf{in}_{i}(d)})$ for $f^i$ local functions, and the constraint sets are such that $x\in\mathcal{X}_t=\mathcal{X}_{t}^1\cap ... \cap \mathcal{X}_{t}^N$, where $x \in \mathcal{X}_t$ if and only if $[x]_{j\in\textbf{in}_{i}(d)}\in\mathcal{X}_{t}^i$ for all $i$ and $t\in\{0,...,T\}$ for $\mathcal X^i$ local sets, and idem for $\mathcal{U}_t$.
\end{assumption}

Assumption \ref{assump: locality} imposes that whenever two subsystems are coupled through either the constraints or the objective function, they then must be within the $d$-local regions -- $d$-incoming and $d$-outgoing sets -- of one another. This is a natural assumption for large structured networks where couplings between subsystems tend to occur at a local scale. We will show that under these conditions,  DLMPC allows for both a localized (a convex) synthesis and implementation of a control action at each subsystem by imposing appropriate $d$-local structural constraints on the \emph{closed-loop system responses} of the system. For the remainder of this paper we focus on developing a distributed and localized algorithmic solution and defer the design of a terminal cost and set that provide theoretical guarantees to our companion paper \cite{amoalonso_guarantees_2021}. 

\section{Localized MPC via System Level Synthesis}\label{sec:sls}

We introduce the SLS framework \cite{wang_system_2019,wang_separable_2018,anderson_system_2019} and justify its utility in MPC problems. We show how SLS naturally allows for locality constraints \cite{wang_separable_2018} to be imposed on the system responses and corresponding controller implementation, and discuss how state and input constraints can be imposed in the presence of disturbances extending the results from \cite{chen_system_2019}, leading to the formulation of the Distributed and Localized MPC problem.

\subsection{Time domain System Level Synthesis}

The following is adapted from $\S2$ of \cite{anderson_system_2019}.  Consider the dynamics of system \eqref{eqn:LTV_system} and let $u_t$ be a causal linear time-varying state-feedback controller, i.e., $u_t=K_t(x_0,x_1,...,x_t)$ where $K_t$ is some linear map to be designed.\footnote{Our assumption of a linear policy is without loss of generality, as an affine control policy $u_t = K_t(x_{0:t}) + v_t$ can always be written as a linear policy acting on the homogenized state $\tilde{x} = [x;1]$.} Let $Z$ be the block-downshift matrix,\footnote{A matrix with identity matrices along its first block sub-diagonal and zeros elsewhere.} and define $\hat A:=\mathrm{blkdiag}(A,...,A)$ and $\hat B:=\mathrm{blkdiag}(B,...,B,0)$. Using the signal (bold) notation, we can compactly write the closed-loop behavior of system \eqref{eqn:LTV_system} under the feedback law $\mathbf u = \mathbf{K} \mathbf x$, over the horizon $t=0,...,T$, which can be entirely characterized by {the system responses $\mathbf \Phi_x$ and $\mathbf \Phi_u$}:
\begin{equation} \label{eqn:Phis}
\begin{split}
\mathbf{x} & = (I-Z(\hat A+\hat B\mathbf{K}))^{-1}\mathbf{w} =: \mathbf\Phi_x \mathbf w\\
\mathbf{u} & = \mathbf{K}(I-Z(\hat A+\hat B\mathbf{K}))^{-1}\mathbf{w} =: \mathbf\Phi_u \mathbf w.
\end{split}
\end{equation}
Here $\mathbf{x}$, $\mathbf{u}$ and $\mathbf{w}$ are the finite horizon signals corresponding to state, control input, and disturbance respectively. By convention, we define the disturbance to contain the initial condition, i.e., $\mathbf{w} = [x_{0}^\mathsf{T}\ w_{0}^\mathsf{T}\ \dots \ w_{T-1}^\mathsf{T}]^\mathsf{T}$.

The approach taken by SLS is to directly parameterize and optimize over the set of achievable system responses $\{\mathbf{\Phi}_x,\mathbf{\Phi}_u\}$ from the exogenous disturbance $\mathbf{w}$ to the state $\mathbf{x}$ and the control input $\mathbf{u}$, respectively.
\begin{theorem}{\label{thm: SLS}} \emph{(Theorem 2.1 of \cite{anderson_system_2019})}
For the system  \eqref{eqn:LTV_system} evolving under the state-feedback policy $\mathbf u = \mathbf K \mathbf x$, for $\mathbf{K}$ a block-lower-triangular matrix, the following are true
\begin{enumerate}
    \item The affine subspace
    \begin{equation}\label{eqn:Z_AB}
        Z_{AB}\mathbf{\Phi}:=\left[I-Z\hat A\ \ -Z\hat B\right]\left[\begin{array}{c}\mathbf{\Phi}_{x}\\\mathbf{\Phi}_{u}\end{array}\right] = I
    \end{equation}
    with lower-triangular $\{\mathbf{\Phi}_{x},\mathbf{\Phi}_{u}\}$ parameterizes all possible system responses \eqref{eqn:Phis}.
    
    \item For any block lower-triangular matrices $\left\{\mathbf{\Phi}_{x},\mathbf{\Phi}_{u}\right\}$ satisfying \eqref{eqn:Z_AB}, the controller $\mathbf{K} = \mathbf{\Phi}_{u}\mathbf{\Phi}_{x}^{-1}$ achieves the desired response \eqref{eqn:Phis}.
\end{enumerate}
\end{theorem}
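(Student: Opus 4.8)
The plan is to prove both claims in parallel, since each rests on a single structural fact about the block-downshift $Z$: because $Z$ sends the $t$-th block to the $(t{+}1)$-th block, any product $Z\mathbf{M}$ is \emph{strictly} block-lower-triangular (zero block-diagonal) for every $\mathbf{M}$. Consequently $I - Z(\hat A + \hat B\mathbf{K})$ is block-lower-triangular (BLT) with identity blocks on its diagonal, hence invertible with BLT inverse. First I would rewrite the dynamics \eqref{eqn:LTV_system} over the horizon in the compact signal form $\mathbf{x} = Z\hat A\mathbf{x} + Z\hat B\mathbf{u} + \mathbf{w}$; verifying this amounts to reading off the $t=0$ block, which returns $x_0$ by the initial-condition convention on $\mathbf{w}$, and the $t\geq 1$ blocks, which reproduce \eqref{eqn:LTV_system}. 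Rearranged, this is exactly $[I - Z\hat A \ \ -Z\hat B]\,[\mathbf{x};\mathbf{u}]^{\mathsf T}$-stacked$ = \mathbf{w}$, the affine relation underlying \eqref{eqn:Z_AB}.

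For the parameterization claim (1), I would start from a causal policy $\mathbf{u} = \mathbf{K}\mathbf{x}$ with $\mathbf{K}$ block-lower-triangular and substitute it into the compact dynamics to obtain $(I - Z(\hat A + \hat B\mathbf{K}))\mathbf{x} = \mathbf{w}$. By the structural observation the left factor is invertible, so $\mathbf{x} = \mathbf{\Phi}_x\mathbf{w}$ and $\mathbf{u} = \mathbf{K}\mathbf{\Phi}_x\mathbf{w} =: \mathbf{\Phi}_u\mathbf{w}$ with $\mathbf{\Phi}_x := (I - Z(\hat A + \hat B\mathbf{K}))^{-1}$, recovering \eqref{eqn:Phis}. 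Both $\mathbf{\Phi}_x$ (the inverse of a BLT matrix with identity diagonal blocks) and $\mathbf{\Phi}_u = \mathbf{K}\mathbf{\Phi}_x$ (a product of BLT matrices) are BLT, and expanding the identity $(I - Z(\hat A + \hat B\mathbf{K}))\mathbf{\Phi}_x = I$ gives precisely $Z_{AB}\mathbf{\Phi} = I$. To justify that this parameterizes \emph{all} achievable responses, I would observe that the affine constraint is both necessary (just shown) and sufficient, the latter being exactly claim (2).

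For claim (2), given any BLT pair $\{\mathbf{\Phi}_x,\mathbf{\Phi}_u\}$ satisfying \eqref{eqn:Z_AB}, I would set $\mathbf{K} := \mathbf{\Phi}_u\mathbf{\Phi}_x^{-1}$ and check it reproduces \eqref{eqn:Phis}. The step I expect to be the crux is establishing that $\mathbf{\Phi}_x$ is invertible with a BLT (i.e.\ causal) inverse, so that $\mathbf{K}$ is both well-defined and causal. The argument is to read off the block-diagonal of $(I - Z\hat A)\mathbf{\Phi}_x - Z\hat B\mathbf{\Phi}_u = I$: since $Z\hat A\mathbf{\Phi}_x$ and $Z\hat B\mathbf{\Phi}_u$ are strictly BLT (a strict downshift times a BLT matrix), the diagonal blocks of $\mathbf{\Phi}_x$ must equal those of $I$, so $\mathbf{\Phi}_x$ is BLT with identity diagonal blocks and hence invertible with BLT inverse. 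With $\mathbf{K}$ in hand, substituting $\mathbf{\Phi}_u = \mathbf{K}\mathbf{\Phi}_x$ into \eqref{eqn:Z_AB} yields $(I - Z(\hat A + \hat B\mathbf{K}))\mathbf{\Phi}_x = I$, whence $\mathbf{\Phi}_x = (I - Z(\hat A + \hat B\mathbf{K}))^{-1}$ and $\mathbf{\Phi}_u = \mathbf{K}\mathbf{\Phi}_x = \mathbf{K}(I - Z(\hat A + \hat B\mathbf{K}))^{-1}$, matching \eqref{eqn:Phis} exactly and closing the proof.
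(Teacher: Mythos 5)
Your proof is correct and follows essentially the same route as the source the paper cites for this result (Theorem 2.1 of \cite{anderson_system_2019}) --- the paper itself states the theorem without proof --- namely: necessity by substituting $\mathbf u = \mathbf K\mathbf x$ into the stacked dynamics $\mathbf{x} = Z\hat A\mathbf{x} + Z\hat B\mathbf{u} + \mathbf{w}$, and sufficiency by reading off from \eqref{eqn:Z_AB} that $\mathbf{\Phi}_x$ has identity diagonal blocks, hence is invertible with block-lower-triangular inverse, so that $\mathbf K = \mathbf{\Phi}_u\mathbf{\Phi}_x^{-1}$ is well-defined, causal, and reproduces \eqref{eqn:Phis}. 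One small correction: your opening claim that $Z\mathbf M$ is strictly block-lower-triangular for \emph{every} $\mathbf M$ is false in general (e.g.\ $\mathbf M = Z^{\mathsf T}$ yields a block-diagonal product); the claim holds when $\mathbf M$ is itself block-lower-triangular, which covers every instance where you invoke it ($\hat A + \hat B\mathbf K$, $\hat A\mathbf{\Phi}_x$, $\hat B\mathbf{\Phi}_u$), so the argument is unaffected.
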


The SLS framework relies on part $1$ of Theorem \ref{thm: SLS} to reformulate optimal control problems as a search over system responses $\mathbf \Phi$ lying in subspace \eqref{eqn:Z_AB}, rather than an optimization problem over states and inputs $\{\mathbf x, \mathbf u\}$. Using parametrization \eqref{eqn:Phis}, we reformulate the MPC subroutine \eqref{eqn:MPC} in terms of the system responses as
\begin{align}
& \underset{\mathbf{\Phi}}{\text{min}} & f(\mathbf{\Phi}\{1\}x_{0})  \label{eqn:MPC-SLS} & ~\\
&  \text{s.t.} & Z_{AB}\mathbf{\Phi}=I, & ~ x_0 = x(\tau), ~ \mathbf{\Phi}\mathbf w\in \mathcal{P}\ \forall\mathbf w \in\mathcal W, \nonumber
\end{align}
where the polytopes $\mathcal{W} := \otimes_{t=0}^{T} \mathcal{W}_t$ and the polytope $\mathcal P$ is defined so that $\mathbf{\Phi}\mathbf{w}\in\mathcal{P}$ if and only if $x_{t}\in\mathcal{X}_{t},\text{ and } u_{t}\in\mathcal{U}_{t}$, for all $t=0,...,T-1$. Notice that $\mathcal W$ is defined so that it does not restrict $x_0$. The objective function $f$ is defined consistent with objective function of problem \eqref{eqn:MPC}. Note that Assumption \ref{assump: locality} directly translates to the objective function and constraint set of the SLS reformulation \eqref{eqn:MPC-SLS}. We emphasize that $\mathbf \Phi\{1\} x_0$ appears in the objective function as it corresponds to the nominal (noise-free) state and input responses. 

The equivalence between the the MPC SLS problem \eqref{eqn:MPC-SLS} and the original MPC problem \eqref{eqn:MPC} stems from the well known fact - restated in terms of the SLS parameterization - that linear time-varying controllers are as expressive as nonlinear controllers over a finite horizon, given a fixed initial condition and noise realization. In fact, for a fixed initial condition and noise realization $\mathbf{w}$, any control sequence $\boldsymbol{u}(\mathbf{w}) := [u_0^\top,\dots,u_{T-1}^\top]^\top$ can be achieved by a suitable choice of feedback matrix $\boldsymbol K(\mathbf{w})$ such that $\boldsymbol{u}(\mathbf{w}) = \boldsymbol K(\mathbf{w})\mathbf{x}$ (that such a matrix always exists follows from a dimension counting argument).  As this control action can be achieved by a linear-time-varying controller $\boldsymbol K(\mathbf{w})$, Theorem \ref{thm: SLS} states that there exists a corresponding achievable system response pair $\left\{\mathbf{\Phi}_{x},\mathbf{\Phi}_{u}\right\}$ such that $\boldsymbol{u}(\mathbf{w}) = \boldsymbol \Phi_u\mathbf{w}$. Thus the SLS reformulation introduces no conservatism relative to open-loop MPC. We discuss the closed-loop setting at the end of this section, and show that the disturbance based parametrization \cite{goulart_optimization_2006} is a special case of ours. 

\textbf{Why use SLS for Distributed MPC:} In the centralized setting, where both the system matrices $(A,B)$ and the system responses $\left\{\mathbf{\Phi}_{x},\mathbf{\Phi}_{u}\right\}$ are dense without information constraints, the SLS parameterized problem \eqref{eqn:MPC-SLS} is slightly more computationally costly than the original MPC problem \eqref{eqn:MPC}, as there are now $n(n+p)T$ decision variables, as opposed to $(n+p)T$ decision variables. However, under suitable localized structural assumptions on $f_t$, and constraint sets $\mathcal X_t$, and $\mathcal U_t$, lifting to this higher dimensional parameterization makes the problem becomes decomposable since it allows to take advantage of the the structure of the underlying system. This allows for not only the convex synthesis of a distributed closed-loop control policy (as is similarly done in \cite{goulart_optimization_2006,furieri_robust_2017}), but also for the solution of this convex synthesis problem to be computed using distributed optimization. 

This latter feature is one of the main contributions of this paper, and in particular, we show that the resulting number of optimization variables in the local subproblems solved at each sub-system scales as $O(d^2T)$, where $d$ is the size of the neighborhood for each subsystem as per Definition \ref{def: in_out set} (usually $d\ll n$) and $T$ is the time horizon, hence independent of the global system size $n$. To the best of our knowledge, this is the first distributed closed-loop MPC algorithm with such properties.

\subsection{Locality in System Level Synthesis} 

Here we illustrate how to enforce the information sharing constraint \eqref{eqn:comms} in the SLS framework, and how localized system responses result in a localized controller implementation. 

A key advantage of using the SLS framework is that the system responses not only parametrize the closed-loop map but also provide a controller realization. In particular, the controller achieving the system responses \eqref{eqn:Phis} can be implemented as
\begin{equation}\label{eqn:implementation}
\begin{aligned}
        \mathbf{u}=\mathbf{\Phi}_u\mathbf{\hat{w}},\ \ \  \mathbf{\hat{x}}=(I - \mathbf{\Phi}_x)\mathbf{\hat{w}},\ \ \ 
        \mathbf{\hat{w}}=\mathbf{x}-\mathbf{\hat{x}},
\end{aligned}
\end{equation}
where $\mathbf{\hat{x}}$ is the nominal state trajectory, and $\mathbf{\hat{w}}=Z\mathbf{w}$ is a delayed reconstruction of the disturbance. The advantage of this controller implementation, as opposed to  $\mathbf u = \mathbf \Phi_u \mathbf \Phi_x^{-1} \mathbf x$, is that any structure imposed on the system response $\{\mathbf\Phi_u, \mathbf \Phi_x\}$ translates directly to structure on the controller implementation \eqref{eqn:implementation}. This is particularly relevant for imposing locality constraints, and we will show how locality in system responses translates into locality of the controller implementation.

We begin by defining the notion of $d$-localized system responses, which follows naturally from the notion of $d$-local information exchange constraints \eqref{eqn:comms}. They consist of system responses with suitable sparsity patterns such that the information exchange needed between subsystems to implement the controller realization \eqref{eqn:implementation} is limited to $d$-hop incoming and outgoing neighbors. 

\begin{definition}{\label{def: locality}}
Let $[\mathbf{\Phi}_{x}]_{ij}$ be the submatrix of system response $\mathbf{\Phi}_x$ describing the map from disturbance $[w]_{j}$ to the state $[x]_i$ of subsystem $i$. The map $\mathbf{\Phi}_{x}$ is \textit{d-localized} if and only if for every subsystem $j$, $[\mathbf{\Phi}_{x}]_{ij}=0\ \forall\ i\not\in\textbf{out}_{j}(d)$. The definition for \textit{d-localized} $\mathbf{\Phi}_u$ is analogous but with disturbance to control action $[u]_i$.
\end{definition}

When the system responses are $d$-localized, then so is the controller implementation \eqref{eqn:implementation}. In particular, by enforcing $d$-localized structure on $\mathbf{\Phi}_{x}$, only a local subset $[\hat{\mathbf{w}}]_{j\in\textbf{in}_i(d)}$ of $\hat{\mathbf{w}}$ are necessary for subsystem $i$ to compute its local disturbance estimate $[\hat{\mathbf{w}}]_{i}$, which ultimately means that only local communication is required to reconstruct the relevant disturbances for each subsystem. Similarly, if $d$-localized structure is imposed on $\mathbf{\Phi}_{u}$, then only a local subset $[\hat{\mathbf{w}}]_{j\in\textbf{in}_i(d)}$ of the estimated disturbances $\hat{\mathbf{w}}$ is needed for each subsystem to compute its control action $[\mathbf u]_i$. Hence, each subsystem only needs to collect information from its $d$-incoming set to implement the control law \eqref{eqn:implementation}, and it only needs to share information with its $d$-outgoing set to allow for other subsystems to implement their respective control laws.
Furthermore, such locality constraints are enforced via an affine subspace constraint in the SLS formulation \eqref{eqn:MPC-SLS}.
\begin{definition}{\label{def: locality constraints}}
A subspace $\mathcal{L}_d$ enforces a $d$\textit{-locality constraint} if $(\mathbf{\Phi}_{x},\mathbf{\Phi}_{u})\in\mathcal{L}_d$ implies that $\mathbf{\Phi}_{x}$ is $d$-localized and $\mathbf{\Phi}_{u}$ is $(d+1)$-localized. \footnote{Notice that we are imposing $\mathbf{\Phi}_{u}$ to be $(d+1)$-localized because in order to localize the effects of a disturbance within the region of size $d$, the ``boundary" controllers at distance $d+1$ must take action (for more details the reader is referred to \cite{anderson_system_2019}).} A system $(A,B)$ is then $d$-localizable if the intersection of $\mathcal{L}_d$ with the affine space of achievable system responses \eqref{eqn:Z_AB} is non-empty.
\end{definition}

 \begin{remark} Although $d$-locality constraints are always convex subspace constraints, not all systems are $d$-localizable. The locality diameter $d$ can be viewed as a design parameter, and for the remainder of the paper, we assume that there exists a $d\ll n$ such that the system $(A,B)$ to be controlled is $d$-localizable. Notice that the parameter $d$ is tuned independently of the horizon $T$, and captures how ``far'' in the interconnection topology a disturbance striking a subsystem is allowed to spread -- as described in detail in \cite{wang_separable_2018}, localized control is a spatio-temporal generalization of deadbeat control.
 \end{remark}

While it was not possible to incorporate locality \eqref{eqn:comms} into the classical MPC formulation \eqref{eqn:MPC} in a convex and computationally tractable manner, it is straightforward to do so via the affine constraint $(\mathbf{\Phi_x,\mathbf\Phi_u)\in\mathcal L_d}$ as per Definition \ref{def: locality}, with the only requirement that some mild compatibility assumptions as per Assumption \ref{assump: locality} between the cost functions, state and input constraints, and $d$-local information exchange constraints are satisfied. In the remainder of this section we exploit this fact when tackling robust state and input constraints, which in turn results in a problem structure that is amenable to using distributed optimization for computing the resulting control policies using only local information.

 \subsection{State and input constraints in System Level Synthesis}

Here we extend the method to deal with robust state and input constraints. We emphasize that the resulting SLS reformulation retains the locality structure of the original problem, which is key to enabling a distributed solution to the problem.

As previously stated, in the noisy case we restrict ourselves to polytopic constraints, such that  $$\mathcal P = \{[\mathbf{x}^\intercal \ \mathbf{u}^\intercal]^\intercal:\ H[\mathbf{x}^\intercal \ \mathbf{u}^\intercal]^\intercal\leq h\},$$ where $H := \emph{\textbf{blkdiag}}(H_{x,1},...,H_{x,T},H_{u,1},...,H_{u,T-1})$ and $h := (h_{x,1},...,h_{x,T},h_{u,1},...,h_{u,T-1})$. We consider two scenarios for the structure of the noise, which to distinguish from the initial condition $x_0$ we denote by $\boldsymbol{\delta}$, i.e., $\mathbf{w} = [x_{0}^\mathsf{T}\ \boldsymbol{\delta}^\mathsf{T}]^\mathsf{T}$.

\subsubsection{Locally norm-bounded disturbance}

\begin{definition}
$\mathcal W_\sigma$ is a \emph{separable local norm-bounded set} if for every signal $\boldsymbol \delta\in\mathcal W_\sigma$, $\| [\boldsymbol \delta]_i \|_p \leq \sigma$  $\forall i$ and $p\mathbb Z_{\geq 1}$.
\end{definition}

\begin{remark}
Note that $\boldsymbol \delta\in\mathcal W_\sigma$ if and only if $\| \boldsymbol \delta \|_\infty \leq \sigma$.
\end{remark}

\begin{lemma}\label{lemm:dual-norm}
Let the noise signal belong to a separable \emph{local} norm-bounded set $\mathcal W_\sigma$. Then, problem \eqref{eqn:MPC-SLS} with additional localization constraints has the following convex reformulation
\begin{align}
& \underset{\mathbf{\Phi}}{\text{min}} & f(\mathbf{\Phi}\{1\}x_{0})  \label{eqn:MPC-SLS_dual_norm} & ~\\
&  \text{s.t.} &\hspace{-2mm} Z_{AB}\mathbf{\Phi}=I , ~ & x_0  = x(\tau), ~ \mathbf{\Phi}\in\mathcal{L}_d, ~  [H\mathbf{\Phi}\{1\}]_i [x_0]_i \nonumber \\
& ~ & ~ & +  \sum_j\sigma \left\Vert e_j^\intercal [H \mathbf{\Phi}\{2:T\}]_i \right\Vert_*e_j \leq[h]_i \nonumber
\end{align}
where $e_j$ are vectors of the standard basis, and $\left\Vert \cdot \right\Vert_*$ the dual norm of $\left\Vert \cdot \right\Vert_p$.
\end{lemma}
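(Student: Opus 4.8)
The plan is to notice that, of all the ingredients of problem \eqref{eqn:MPC-SLS}, the only one not already present in the finite convex form \eqref{eqn:MPC-SLS_dual_norm} is the robust polytopic constraint $\mathbf{\Phi}\mathbf{w}\in\mathcal{P}$, i.e. $H\mathbf{\Phi}\mathbf{w}\leq h$ for all admissible $\mathbf{w}$. The objective $f(\mathbf{\Phi}\{1\}x_0)$, the achievability constraint $Z_{AB}\mathbf{\Phi}=I$, the pinned initial condition $x_0=x(\tau)$, and the locality constraint $\mathbf{\Phi}\in\mathcal{L}_d$ all carry over verbatim. Hence the whole task reduces to rewriting this single semi-infinite constraint as a finite family of convex constraints, and to checking that the rewriting does not destroy locality.

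First I would separate the fixed and the uncertain parts of $\mathbf{w}$. Writing $\mathbf{w}=[x_0^\intercal\ \boldsymbol{\delta}^\intercal]^\intercal$ and using the block-column notation gives $H\mathbf{\Phi}\mathbf{w}=H\mathbf{\Phi}\{1\}x_0+H\mathbf{\Phi}\{2:T\}\boldsymbol{\delta}$, where the first summand is constant (since $x_0=x(\tau)$ is fixed and $\mathcal{W}$ does not restrict $x_0$) and only $\boldsymbol{\delta}$ ranges over $\mathcal{W}_\sigma$. The semi-infinite constraint then holds if and only if, componentwise,
\[
H\mathbf{\Phi}\{1\}x_0 + \max_{\boldsymbol{\delta}\in\mathcal{W}_\sigma} H\mathbf{\Phi}\{2:T\}\boldsymbol{\delta} \leq h.
\]
The core computation is this inner maximization: for a single row $r^\intercal$ of $H\mathbf{\Phi}\{2:T\}$, the definition of the dual norm gives $\max_{\|z\|_p\leq\sigma}r^\intercal z=\sigma\|r\|_*$. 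Because $\mathcal{W}_\sigma$ is separable — it constrains each block $[\boldsymbol{\delta}]_i$ independently — the maximization decouples across disturbance blocks, so the worst case is attained blockwise and can be evaluated from purely local data. Applying this to each row $j$ of the block-row $[H\mathbf{\Phi}\{2:T\}]_i$ associated with subsystem $i$, and re-stacking the scalars $\sigma\|e_j^\intercal[H\mathbf{\Phi}\{2:T\}]_i\|_*$ into a vector via $\sum_j(\cdot)\,e_j$, yields exactly the constraint appearing in \eqref{eqn:MPC-SLS_dual_norm}.

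Two checks close the argument. For convexity: with $x_0$ fixed, the term $[H\mathbf{\Phi}\{1\}]_i[x_0]_i$ is linear in $\mathbf{\Phi}$, while each $\|e_j^\intercal[H\mathbf{\Phi}\{2:T\}]_i\|_*$ is a norm of a quantity linear in $\mathbf{\Phi}$ and hence convex; a linear term plus convex terms bounded above by a constant is a convex constraint, so \eqref{eqn:MPC-SLS_dual_norm} is a convex program. For locality: since $\mathbf{\Phi}\in\mathcal{L}_d$ is imposed unchanged and forces the $i$-th block-rows of $\mathbf{\Phi}$, hence of $H\mathbf{\Phi}$, to be supported only on columns indexed by $\textbf{in}_i(d)$, each reformulated constraint for subsystem $i$ depends only on local blocks, so the reformulation preserves the structure needed for distributed computation.

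The step I expect to be the main obstacle is making the inner maximization exact and local at once: one must argue that the worst case over the separable set $\mathcal{W}_\sigma$ is actually achieved (so the reformulation is an equivalence, not a conservative relaxation), and simultaneously track the block partition carefully enough that the resulting dual-norm terms involve only the local disturbance blocks picked out by $\mathbf{\Phi}\in\mathcal{L}_d$. Getting the bookkeeping right between the subsystem row-index $i$, the within-block row-index $j$, and the disturbance-block decoupling is where the care lies; the convexity claim and the invariance of the remaining constraints are then routine.
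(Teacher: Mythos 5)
Your proposal takes essentially the same route as the paper's proof: both split $H\mathbf{\Phi}\mathbf{w}$ into the fixed term $H\mathbf{\Phi}\{1\}x_0$ plus the uncertain term $H\mathbf{\Phi}\{2:T\}\boldsymbol{\delta}$, take the element-wise worst case over $\mathcal{W}_\sigma$, evaluate it exactly via the dual-norm identity, and use separability of the disturbance set together with $\mathbf{\Phi}\in\mathcal{L}_d$ to decouple the constraint subsystem-wise into $[H\mathbf{\Phi}\{1\}]_i[x_0]_i+\sum_j\sigma\left\Vert e_j^\intercal[H\mathbf{\Phi}\{2:T\}]_i\right\Vert_* e_j\leq[h]_i$. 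Your additional checks (attainment of the worst case, convexity of the dual-norm terms, preservation of locality) are correct refinements of the same argument rather than a different approach.
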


\begin{proof}
The proof follows from a simple duality argument on the robust polytopic constraint $\mathbf \Phi \mathbf w \in \mathcal P\;\forall \mathbf w \in\mathcal W$. In particular, $\mathbf \Phi$ has to satisfy $$H\mathbf{\Phi}\{1\} x_0 + \max_{\{\left\Vert [\boldsymbol{\delta}]_i \right\Vert_p\ \forall i \} } H\mathbf \Phi\{2:T\} \boldsymbol{\delta} \leq h,$$ where the maximization over $\boldsymbol{\delta}$ and the inequality are element-wise. Since $\mathbf\Phi$ is localized and the disturbance set is separable and local, this can be equivalently written as: $$[H\mathbf{\Phi}\{1\}]_i [x_0]_i+\max_{\left\Vert [\boldsymbol{\delta}]]_i \right\Vert_p} [H\mathbf \Phi\{2:T\}]_i [\boldsymbol{\delta}]_i \leq [h]_i \ \forall i.$$ And by definition of dual norm gives, the second term on the left-hand side gives $$\sum_j\sigma \left\Vert e_j^\intercal [H\mathbf \Phi\{2:T\}]_i \right\Vert_* e_j \leq[h]_i  \ \forall i.$$ \end{proof}

\begin{remark}
Notice that the argument of Lemma \ref{lemm:dual-norm} can be extended to any setting for which a closed-form expression for $\sum_j \Big( \sup_{[\mathbf{w}]_i \in \mathcal{W}^i} e_j^\intercal [H \mathbf{\Phi}]_i [\mathbf{w}]_i \Big)e_j\leq[h]_i$ can be computed. Moreover, since $H$ is localized block-diagonal, and $\mathbf\Phi$ is localized, this constraint can be enforced using only local information.
\end{remark}

\subsubsection{Polytopic disturbance} 

\begin{lemma} \label{lemm:dual-polytope}
Let the noise signal belong to a polytope, i.e., $\boldsymbol \delta \in\{\boldsymbol \delta :\ G\boldsymbol \delta \leq g\}$, where $$G := \emph{\textbf{blkdiag}}(G_{1},...,G_{T}) \text{ and } g := (g_{1},...,g_{T}),$$ and each of the $\{G_t\}_{t=1}^{T}$ are block-diagonal with structure compatible with subsystem-wise decomposition of $\mathbf{w}$. Then, problem \eqref{eqn:MPC-SLS} localized can be reformulated as
\begin{align}
& \underset{\mathbf{\Phi},\mathbf{\Xi}\geq0}{\text{min}} & f(\mathbf{\Phi}\{1\}x_{0})   \label{eqn:MPC-SLS_dual_polytopic} & ~\\
&  \text{s.t.} & Z_{AB}\mathbf{\Phi}=I, & ~ x_0 = x(\tau), ~ \mathbf{\Phi}\in\mathcal{L}_d, ~ \mathbf{\Xi}\in\mathcal{L}_{d_H}, \nonumber \\
& ~ &H\mathbf{\Phi}\{1\}x_{0} + & \mathbf{\Xi}g \leq h,\ H\mathbf{\Phi}\{2:T\}=\mathbf{\Xi}G, \nonumber
\end{align}
with the constraint $\mathbf{\Xi}\geq0$  satisfied component-wise, and $\mathcal{L}_{d_H}$ denotes the subspace of matrices with the same sparsity as $H\mathbf{\Phi}\{2:T\}$. 
\end{lemma}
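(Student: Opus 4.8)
The plan is to mirror the duality argument of Lemma \ref{lemm:dual-norm}, replacing the norm-bounded robust constraint with a polytopic one and then invoking linear programming duality. First I would expand the robust constraint $\mathbf{\Phi}\mathbf{w}\in\mathcal{P}$ for all $\mathbf{w}\in\mathcal{W}$ appearing in \eqref{eqn:MPC-SLS}. Writing $\mathbf{w}=[x_0^\intercal\ \boldsymbol{\delta}^\intercal]^\intercal$ and splitting the block columns of $\mathbf{\Phi}$ accordingly, we have $\mathbf{\Phi}\mathbf{w}=\mathbf{\Phi}\{1\}x_0+\mathbf{\Phi}\{2:T\}\boldsymbol{\delta}$. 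Since $x_0=x(\tau)$ is fixed and only $\boldsymbol{\delta}$ ranges over the polytope $\{\boldsymbol{\delta}:G\boldsymbol{\delta}\leq g\}$, the constraint $H\mathbf{\Phi}\mathbf{w}\leq h$ for all admissible noise is equivalent, row by row, to
\[
H\mathbf{\Phi}\{1\}x_0+\max_{\boldsymbol{\delta}:\,G\boldsymbol{\delta}\leq g}H\mathbf{\Phi}\{2:T\}\boldsymbol{\delta}\leq h,
\]
where the maximization is understood element-wise over the rows.

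Next I would dualize each row-wise maximization. For the $r$-th row, $\max_{\boldsymbol{\delta}}\,[H\mathbf{\Phi}\{2:T\}]_r\boldsymbol{\delta}$ subject to $G\boldsymbol{\delta}\leq g$ is a linear program; assuming the noise polytope is nonempty, the robust constraint forces this value to be finite, so strong LP duality applies and the maximum equals $\min_{\xi_r\geq0}\,g^\intercal\xi_r$ subject to $G^\intercal\xi_r=[H\mathbf{\Phi}\{2:T\}]_r^\intercal$. Consequently the inequality for row $r$ holds if and only if there exists $\xi_r\geq0$ with $G^\intercal\xi_r=[H\mathbf{\Phi}\{2:T\}]_r^\intercal$ and $[H\mathbf{\Phi}\{1\}x_0]_r+g^\intercal\xi_r\leq[h]_r$ (the backward direction being immediate from weak duality, since $[H\mathbf{\Phi}\{2:T\}]_r\boldsymbol{\delta}=\xi_r^\intercal G\boldsymbol{\delta}\leq\xi_r^\intercal g$). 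Stacking the dual vectors $\xi_r^\intercal$ as the rows of a matrix $\mathbf{\Xi}\geq0$ collapses these per-row certificates into the two matrix conditions $\mathbf{\Xi}G=H\mathbf{\Phi}\{2:T\}$ and $H\mathbf{\Phi}\{1\}x_0+\mathbf{\Xi}g\leq h$, recovering \eqref{eqn:MPC-SLS_dual_polytopic} up to the locality constraint on $\mathbf{\Xi}$.

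The remaining and most delicate step is to justify restricting $\mathbf{\Xi}$ to $\mathcal{L}_{d_H}$, i.e. to the sparsity pattern of $H\mathbf{\Phi}\{2:T\}$, without loss of generality. Here I would exploit that $G=\textbf{blkdiag}(G_1,\dots,G_T)$ is block-diagonal with structure compatible with the subsystem-wise partition of $\boldsymbol{\delta}$, so that in each row-wise LP the constraint $G\boldsymbol{\delta}\leq g$ decouples across disturbance sub-blocks. For any disturbance block on which the $r$-th row of $H\mathbf{\Phi}\{2:T\}$ is identically zero, the corresponding sub-LP has objective coefficient zero and optimal value zero (attained with the associated dual sub-vector set to zero); hence the dual certificate can be chosen with $\xi_r$ supported only on the blocks where $[H\mathbf{\Phi}\{2:T\}]_r$ is nonzero. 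Because $\mathbf{\Phi}\in\mathcal{L}_d$ and $H$ is localized block-diagonal, this is exactly the sparsity pattern defining $\mathcal{L}_{d_H}$, so imposing $\mathbf{\Xi}\in\mathcal{L}_{d_H}$ is without loss of generality. I expect this last locality/decoupling argument --- rather than the duality itself --- to be the main obstacle, since it requires checking that the block structure of $G$ aligns with the locality pattern inherited by $H\mathbf{\Phi}\{2:T\}$, and that the sparse dual choice remains simultaneously optimal for every row.
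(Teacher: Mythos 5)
Your proposal is correct and follows essentially the same route as the paper's proof: rewrite the robust constraint as a row-wise maximization over the polytope, replace each row LP by its dual via strong duality, stack the dual variables into $\mathbf{\Xi}\geq0$ to obtain $H\mathbf{\Phi}\{1\}x_0+\mathbf{\Xi}g\leq h$ and $H\mathbf{\Phi}\{2:T\}=\mathbf{\Xi}G$, and then use the block-diagonality of $G$ to justify $\mathbf{\Xi}\in\mathcal{L}_{d_H}$. If anything, your treatment of the last step is slightly more careful than the paper's --- you argue the sparse dual certificate \emph{can be chosen} without loss of generality (supported only on blocks where the row of $H\mathbf{\Phi}\{2:T\}$ is nonzero), whereas the paper asserts the sparsity of $\mathbf{\Xi}$ follows directly from $\mathbf{\Xi}G$ matching the sparsity of $H\mathbf{\Phi}\{2:T\}$; your phrasing correctly covers the case where a block of $G$ has a nontrivial left null space.
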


\begin{proof}
The reformulation of the robust polytopic constraint follows from duality. In particular, $\mathbf \Phi$ has to satisfy $$H\mathbf{\Phi}\{1\} x_0 + \max_{G\boldsymbol\delta\leq g} H\mathbf \Phi \{2:T\} \boldsymbol{\delta} \leq h.$$ We study this constraint row-wise and focus on the second term on the left-hand side, which can be seen as a linear program (LP) in $\boldsymbol\delta$ for each row of $H$. Since strong duality holds, we have that each of the LPs can equivalently be replaced by its corresponding dual problem. In particular, we can solve for the $k^\text{th}$ row of the the second term on the left-hand side as
\begin{align*}
&\underset{\mathbf\Xi(k,:)\geq0}{\text{min}} &\mathbf\Xi(k,:) g &~\\
&\text{s.t.} &H(k,:)\mathbf\Phi\{2   & :   T\}=\mathbf\Xi(k,:) G, 
\end{align*}
where the matrix operator $\mathbf\Xi$ results from stacking the dual variables from the row-wise dual LPs. Hence, the robust polytopic constraint can be replaced by $$ H\mathbf{\Phi}\{1\}x_{0} + \mathbf{\Xi}g \leq h,\ H\mathbf{\Phi}\{2:T\}=\mathbf{\Xi}G, $$ where $\boldsymbol\Xi\geq0$ (satisfied component-wise) becomes a decision variable of the MPC problem. Furthermore, the constraint $H\mathbf\Phi\{2 : T\}=\mathbf\Xi G$ allows for a sparse structure on $\mathbf \Xi$. In particular, $\mathbf\Xi G$ has to have the same sparsity as $H\mathbf\Phi\{2 : T\}$. When $G$ is block-diagonal, it immediately follows that if $\sum_kH(i,k)\mathbf\Phi\{2 : T\}(k,j)=0$, then $\mathbf\Xi(i,j)=0$. Hence, $\mathbf{\Xi}$ lies in $\mathcal{L}_{d_H}$, and together with the dual reformulation of the robust polytopic constraint $G\boldsymbol\delta\leq g$, gives rise to problem \eqref{eqn:MPC-SLS_dual_polytopic}. \end{proof}

\begin{remark}
By Assumption \ref{assump: locality}, the subspace $\mathcal{L}_{d_H}$ contains matrices with sparsity such that subsystems at most $2d$-hops away are coupled. 
\end{remark}

These results allow us to solve the DLMPC problem \eqref{eqn:MPC-SLS} using standard convex optimization methods, and further preserves the locality structure of the original problem under the given assumptions.  Imposing $d$-local structure on the system responses, coupled with an assumption of compatible $d$-local structure on the objective functions and constraints of the MPC problem \eqref{eqn:MPC}, leads to a \emph{structured} SLS MPC optimization problem \eqref{eqn:MPC-SLS}.  This structural compatibility in \emph{all optimization variables, cost functions, and constraints} is the key feature that we exploit in Section \ref{sec:admm} to apply distributed optimization techniques to scalably and \emph{exactly} solve problem \eqref{eqn:MPC-SLS}.\footnote{Notice that these locality constraints are defined in terms of the topology $\mathcal{G}_{(A,B)}$, so the structure imposed on $\mathbf\Phi$ will be compatible with the structure of the matrix $Z_{AB}$ that defines constraint \eqref{eqn:Z_AB}.}

\textbf{Why previous methods are not amenable to distributed solutions:} While previous methods \cite{goulart_optimization_2006,furieri_robust_2017} allow for similar structural constraints to be imposed on the controller realization through the use of either disturbance feedback or Youla parameterizations (subject to Quadratic Invariance \cite{rotkowitz_characterization_2006} conditions), the resulting synthesis problems do not enjoy the structure needed for distributed optimization techniques to be effective. We focus on the method defined in \cite{goulart_optimization_2006}, as a similar argument applies to the synthesis problem in \cite{furieri_robust_2017}.  Intuitively, the disturbance based feedback parameterization of \cite{goulart_optimization_2006} only parameterizes the closed-loop map $\mathbf \Phi_u$ from $\mathbf w \to \mathbf u$, and leaves the state $\mathbf x$ as a free variable. Hence, regardless as to what structure is imposed on the objective functions, constraints, and the map $\mathbf{\Phi}_u$, the resulting optimization problem is strongly and globally coupled because the state variable $\mathbf{x}$ is always dense.  This can be made explicit by noticing that the disturbance feedback parameterization of \cite{goulart_optimization_2006} can be recovered from the SLS parameterization of Theorem \ref{thm: SLS}, and we present a formal derivation in the Appendix. A similar coupling arises in the Youla based parameterization suggested in \cite{furieri_robust_2017}. This limits  their usefulness to smaller scale examples where centralized computation of policies is feasible. In contrast, by explicitly parameterizing the additional system response $\mathbf{\Phi}_x$ from $\mathbf{w}\to\mathbf{x}$, we can naturally enforce the structure needed for distributed optimization techniques to be fruitfully applied.  

\section{Distributed AND Localized MPC based on ADMM}\label{sec:admm}

We now show that ADMM \cite{boyd_distributed_2010} can be used to distribute the DLMPC problem \eqref{eqn:MPC-SLS_dual}. We exploit the separability (Assumption \ref{assump: locality}), locality constraints, and the notion of row/column-wise separability (to be defined next), to solve each of the local subproblems in parallel and with $d$-local information only. In what follows we restrict Assumption \ref{assump: locality} to the case where only dynamical coupling is considered. In our companion paper \cite{amoalonso_guarantees_2021}we extend these results to all cases considered in Assumption \ref{assump: locality} i.e., constraints and objective functions that introduce $d$-localized couplings. Hence, all cost function and constraints have structure:
\begin{gather*}
f(\mathbf x,\mathbf u) = \sum_{i = 1}^N f^i([\mathbf x]_i, [\mathbf u]_i), \text{ and} \\
[\mathbf x^\intercal\ \mathbf u^\intercal]^\intercal \in \mathcal P \text{ if and only if } \begin{bmatrix}[\mathbf x]_i \\ [\mathbf u]_i \end{bmatrix}\in\mathcal P^i,
\end{gather*}
where $\mathcal P = \mathcal P^1\times\dots\times\mathcal P^N$. 

By definition of the SLS system responses \eqref{eqn:Phis}, we can equivalently write these conditions in terms of $\mathbf \Phi$ as  
\begin{gather*}
f(\mathbf \Phi\{1\}x_0) = \sum_{i = 1}^N f^i\big(\mathbf \Phi\{1\}(\mathfrak{r}_i,:)x_0\big), \text{ and} \\
\mathbf \Phi \in \mathcal P \text{ if and only if } \mathbf\Phi(\mathfrak{r}_i,:)x_0\in\mathcal P^i\ \forall i,
\end{gather*}
where $\mathfrak r_i$ is the set of rows in $\mathbf \Phi$ corresponding to subsystem $i$, i.e., those that parametrize $[\mathbf x]_i$ and $[\mathbf u]_i$. These separability features are formalized as follows: 
\begin{definition}
Given the partition $\{\mathfrak r_1,...,\mathfrak r_k\}$, a functional/set is \emph{row-wise separable} if:
\begin{itemize}
\item For a functional, $g(\mathbf \Phi) = \sum_{i = 1}^k g_i\big(\mathbf \Phi(\mathfrak{r}_i,:)\big)$ for some functionals $g_i$ for $i=1,...,k$.
\item For a set, $\mathbf \Phi \in \mathcal P$ if and only if $\mathbf \Phi(\mathfrak{r}_i,:) \in \mathcal P_i \ \forall i$ for some sets $\mathcal P_i$ for $i=1,...,k$.
\end{itemize}
\end{definition}
An analogous definition exists for \emph{column-wise separable} functionals and sets \cite{wang_separable_2018,anderson_system_2019}, where the partition $\{\mathfrak c_1,...,\mathfrak c_k\}$ entails the columns of $\mathbf \Phi$, i.e., $\mathbf \Phi(:,\mathfrak{c}_i)$.

When the objective function and all the constraints of an optimization problem are separable with respect to a partition of cardinality $k$, then the optimization trivially decomposes into $k$ independent subproblems. However, this is not the case for the localized DLMPC problem \eqref{eqn:MPC-SLS}, since some elements are row-wise separable while others are column-wise. To make it amenable to a distributed solution, we propose the following reformulation, which applies to the noise-free case as well as to both noisy cases (locally norm-bounded and polytopic) considered in the previous section:
\begin{align}
& \underset{\mathbf{\tilde\Phi},\mathbf{\tilde\Psi}}{\text{min}} & f(M_1\mathbf{\tilde \Phi}\{1\}x_{0}) \label{eqn:MPC-SLS_dual} & ~\\
&  \text{s.t.} & Z_{AB}M_2\mathbf{\tilde \Psi}=I, & ~ x_0 = x(\tau), ~ \mathbf{\tilde\Phi},\mathbf{\tilde \Psi}\in\mathcal{L}_d, \nonumber \\
& ~ &\mathbf{\tilde\Phi}\in\mathcal{\tilde P},\ \mathbf{\tilde \Phi}&=\tilde H\mathbf{\tilde\Psi}, \nonumber
\end{align}

where: 
\begin{itemize}
\item In the noise-free case:
$$\hspace{-5mm}\mathbf{\tilde{\Phi}} := \mathbf{\Phi}\{1\}, ~  \mathbf{\tilde{\Psi}} := \mathbf{\Psi}\{1\}, ~ M_1=M_2=\tilde{H} =: I,$$
$$\tilde{\mathcal P}=\{\mathbf\Phi\{1\}:\ \mathbf\Phi\{1\}x_0\in\mathcal P\}.$$

\item In the noisy case:
$$M_1 := \begin{bmatrix} I & 0 \\ 0 & 0\end{bmatrix}, M_2 := \begin{bmatrix} 0 & I \end{bmatrix}, ~ \tilde{H} := \begin{bmatrix}I & 0 \\ 0 & H\end{bmatrix},$$

\begin{itemize}
\item When noise is locally bounded:
$$\hspace{-5mm}\mathbf{\tilde{\Phi}} := \begin{bmatrix} \mathbf{\Phi}\{1\} & \hspace{-2mm}0 \\ \mathbf{\Omega}\{1\} & \hspace{-2mm}\mathbf{\Omega}\{2:T\} \end{bmatrix}, ~ \mathbf{\tilde{\Psi}} := \begin{bmatrix} \mathbf{\Psi}\{1\} & \hspace{-2mm}0 \\ \mathbf{\Psi}\{1\} & \hspace{-2mm}\mathbf{\Psi}\{2:T\}\end{bmatrix},$$
$$\hspace{-5mm} \tilde{\mathcal P} := \{ \mathbf \Omega : [\mathbf{\Omega}\{1\}]_i [x_0]_i + \ \sum_j\sigma \left\Vert e_j^\intercal [\mathbf{\Omega}\{2:T\}]_i e_j\right\Vert_* \leq[h]_i \forall i \}.$$

\vspace{-3mm}
\item When noise is polytopic:
$$\hspace{-5mm}\mathbf{\tilde{\Phi}} := \begin{bmatrix} \mathbf{\Phi}\{1\} & 0 \\ \mathbf{\Omega}\{1\} & \mathbf{\Xi}G\end{bmatrix}, ~ \mathbf{\tilde{\Psi}} := \begin{bmatrix} \mathbf{\Psi}\{1\} & 0 \\ \mathbf{\Psi}\{1\} & \mathbf{\Psi}\{2:T\}\end{bmatrix}, $$
$$\hspace{-4mm}\tilde{\mathcal P} :=\{\mathbf\Omega,\mathbf\Xi : \mathbf{\Omega}\{1\}x_{0} + \mathbf{\Xi}g \leq h, ~ \mathbf\Xi\geq0 \}.$$
\end{itemize}
\end{itemize}

The matrices $\mathbf \Psi$ and $\mathbf \Omega$ are simply duplicates of $\mathbf \Phi$, and $\mathbf \Xi$ is the dual variable as introduced in Lemma \ref{lemm:dual-polytope}. The advantage of introducing these variables and creating the augmented variables $\tilde{\mathbf \Phi}$ and $\tilde{\mathbf \Psi}$ is that all components of problem \eqref{eqn:MPC-SLS_dual} involving $\tilde{\mathbf \Phi}$ are row-wise separable, and all components involving $\tilde{\mathbf \Psi}$ are column-wise separable. We can easily separate these two computations via ADMM using the relationship between both variables $\tilde H\mathbf{\tilde \Phi}=\mathbf{\tilde\Psi}$. Furthermore, we take advantage of the structure of these subproblems and separate them with respect to a row and column partition induced by the subsystem-wise partitions of the state and control inputs, $[\mathbf x]_i$ and $[\mathbf u]_i$ for each subsystem $i$. Each of these row and column subproblems resulting from the distribution across subsystems can be solved independently and in parallel, where each subsystem solves for its corresponding row and column partition. Moreover, since locality constraints are imposed, the decision variables $\tilde{\mathbf\Phi},\tilde{\mathbf\Psi}$ have a sparse structure. This means that the length of the rows and columns that a subsystem solves for is much smaller than the length of the rows and columns of $\mathbf{\Phi}$. For instance, when considering the column-wise subproblem evaluated at subsystem $i$, the $j^\text{th}$ row of the $i^\text{th}$ column partitions of $\mathbf\Phi_x$ and $\mathbf\Phi_u$ is nonzero only if $j\in\cup_{k\in\textbf{out}_i(d)}\mathfrak{r}_k$ and $j\in\cup_{k\in\textbf{out}_i(d+1)}\mathfrak{r}_k$ respectively.\footnote{An algorithm to find the relevant components for each subsystem rows and columns can be found in Appendix A of \cite{wang_separable_2018}.} Thus, the subproblems that subsystem $i$ solves for are:
\begin{subequations}\label{eqn:DLMPC}
{\allowdisplaybreaks 
\begin{align}
	&\hspace{-1.5mm}[\mathbf{\tilde{\Phi}}]_{i_r}^{k+1}  \hspace{-1mm} = \hspace{-1mm}
		\left\{\begin{aligned}
			&\hspace{-.5mm}\underset{[\mathbf{\tilde\Phi}]_{i_r}}{\text{argmin}} & \hspace{-2mm}f([M_1\mathbf\Phi]_{i_r}[x_0]_{i_r}) \hspace{-1mm}+\hspace{-1mm} \frac{\rho}{2} g_{i_r}( \mathbf{\tilde\Phi},\mathbf{\tilde\Psi}^k,\mathbf{\Lambda}^k)&\\
			&~~\text{s.t.}  &\hspace{-3.5mm}[\mathbf{\tilde\Phi}]_{i_r}\in\tilde{\mathcal{P}}^i\cap\mathcal L_d,~[x_0]_{i_r}=[x(\tau)]_{i_r}&
		\end{aligned}\right\} \label{eqn:DLMPC-row}\ 
	\\ \vspace{5pt}
	&\hspace{-1.5mm} [\mathbf{\tilde\Psi}]_{i_c}^{k+1} = 
	\left\{\begin{aligned}
			&\underset{[\mathbf{\tilde\Psi}]_{i_c}}{\text{argmin}} &g_{i_c}( \mathbf{\tilde\Phi}^{k+1},\mathbf{\tilde\Psi},\mathbf{\Lambda}^k)&\\
			&~~\text{s.t.}  & [Z_{AB}]_{i_c}[M_2\mathbf{\tilde\Psi}]_{i_c} = & [I]_{i_c}	
	\end{aligned}\right\} \label{eqn:DLMPC-column}
	\\[5pt]
	  &\hspace{-1.5mm} [\mathbf{\Lambda}]_{i_c}^{k}  =  g_{i_c}( \mathbf{\tilde\Phi}^{k+1},\mathbf{\tilde\Psi}^{k+1},\mathbf{\Lambda}), \label{eqn:DLMPC-lagrange}
\end{align}}
\end{subequations}  
where the scalar $\rho$ is the ADMM multiplier, operator $\mathbf \Lambda$ is the dual variable associated with ADMM, and
$$g_{*}( \mathbf{\tilde\Phi},\mathbf{\tilde\Psi},\mathbf{\Lambda}) = \left\Vert [\mathbf{\tilde\Phi}]_{*} - [\tilde H]_{*} [\mathbf{\tilde\Psi}]_{*} + [\mathbf{\Lambda}]_{*} \right\Vert_F^2,$$
where $*$ indicates ${i_r}$ or ${i_c}$ depending on the context. To simplify notation we denote as $[\mathbf\Phi]_{i_r}$ the submatrix of $\mathbf\Phi$ formed by the nonzero components of the relevant rows for subsystem $i$, $\mathfrak{r}_i$, and as $[\mathbf\Phi]_{i_c}$ the submatrix of $\mathbf\Phi$ formed by the nonzero components of the relevant columns for subsystem $i$, $\mathfrak{c}_i$. We use a similar bracket notation for the vectors and matrices that multiply the decision variables to indicate that those are composed from the relevant components of their global versions. 

In subroutine \eqref{eqn:DLMPC}, the computation of iterate \eqref{eqn:DLMPC-column} can be sped up by virtue of Lemma \ref{lem:closed-form} (Appendix) using the closed-from solution 
\begin{equation}\label{eqn:closed_form}
	[\mathbf{\Psi}]_{i_c}^{k+1} = z^{*} 
	\Big( [\tilde H]_{i_c}, [\mathbf{\tilde\Phi}]^{k+1}_{i_c}+[\mathbf{\tilde\Lambda}]^k_{i_c}, [Z_{AB}M_2]_{i_c}, [I]_{i_c} \Big).
\end{equation}

Notice that the number of nonzero components for both the rows and the columns is much smaller than the size of the network $N$ since it is determined by the size of the local neighborhood $d$ through the locality constraints. In turn, this implies that the subsystem only requires small submatrices of the system and constraint matrices $A,B,H,$ etc, to perform the computations. Therefore, the DLMPC subroutine \eqref{eqn:DLMPC} can be solved in parallel in a distributed manner across the subsystems of the network, where each solves for a local patch of the system responses using local information only. In Algorithm \ref{alg:DLMPC} we present the DLMPC algorithm that each sub-controller executes, and where only local information exchanges (within $d$-hop neighbors) take place.

\setlength{\textfloatsep}{0pt}
\begin{algorithm}[ht]
\caption{Subsystem $i$ DLMPC implementation}\label{alg:DLMPC}
\begin{algorithmic}[1]
\Statex \textbf{input:} $\epsilon_p, \epsilon_d, \rho>0$.
\State Measure local state $[x(\tau)]_{i}$, $k\leftarrow0$.
\State Share the measurement with neighbors in $\textbf{out}_{i}(d)$.  Receive the corresponding $[x(\tau)]_j$ from $\textbf{in}_{i}(d)$ and build $[x_0]_{i_r}$.
\State Solve optimization problem \eqref{eqn:DLMPC-row}.
\State Share $[{\mathbf{\tilde\Phi}}]_{i_{r}}^{k+1}$ with $\textbf{out}_{i}(d)$. Receive the corresponding $[{\mathbf{\tilde\Phi}}]_{j_{r}}^{k+1}$ from $\textbf{in}_{i}(d)$ and build $[{\mathbf{\tilde\Phi}}]_{i_{c}}^{k+1}$.
\State Solve problem \eqref{eqn:DLMPC-column} via the closed form \eqref{eqn:closed_form}.
\State Share $[{\mathbf{\tilde\Psi}}]_{i_{c}}^{k+1}$ with $\textbf{out}_{i}(d)$. Receive the corresponding $[{\mathbf{\tilde\Psi}}]_{j_{c}}^{k+1}$ from $\textbf{in}_{i}(d)$ and build $[{\mathbf{\tilde\Psi}}]_{i_{r}}^{k+1}$.
\State Perform the multiplier update step \eqref{eqn:DLMPC-lagrange}.
\State \textbf{if} {$\left\Vert[{\mathbf{\tilde\Phi}}]_{i_{r}}^{k+1}-[\tilde{H}]_{i_{r}}[{\mathbf{\tilde\Psi}}]_{i_{r}}^{k+1}\right\Vert_F\leq\epsilon_{p}$
\Statex  and $\left\Vert[{\mathbf{\tilde\Psi}}]_{i_{r}}^{k+1}-[{\mathbf{\tilde\Psi}}]_{i_{r}}^{k}\right\Vert_F\leq\epsilon_{d}$}\textbf{:}
\Statex $\;\;$ Apply control action $[u_0]_i = [\Phi_{u,0}[0]]_{i_{r}}[x_0]_{i_r}$, and return to step 1.
\Statex \textbf{else:} 
\Statex $\;\;$ Set $k\leftarrow k+1$, return to step 3.
\end{algorithmic}
\end{algorithm}

\textbf{Computational complexity of Algorithm \ref{alg:DLMPC}:}         
The complexity of the algorithm is determined by update steps 3, 5 and 7. In particular, steps 5 and 7 can be directly solved in closed form, reducing their evaluation to the multiplication of matrices of dimension $O(d^2T^2)$ in the noisy case, and $O(d^2T)$ in the noise-free case. In general, step 3 requires an optimization solver where each local iterate sub-problem is over $O(d^{2}T^2)$ optimization variables in the noisy case, and $O(d^2T)$ in the noise-free case, subject to $O(dT)$ constraints. In certain cases, step 3 can also be computed in closed form if a proximal operator exists for the formulation \cite{boyd_distributed_2010}. For instance this is true if it reduces to a quadratic convex cost function subject to affine equality constraints, in which case complexity reduces to $O(d^{2}T)$ since piece-wise closed form solutions can be computed \cite{amo_alonso_explicit_2020}. Notice that the complexity of the subproblems is largely dependent on whether noise is considered. The reason for this is that in the noise-free case only the first block-column of $\mathbf\Phi$ is considered, whereas in the presence of noise all $T$ block-columns have to be computed. Regardless, the use of locality constraints leads to a significant computational saving when $d\ll n$. 
The communication complexity - as determined by steps 2, 4 and 6 - is limited to the local exchange of information between $d$-local neighbors. 

\section{Simulation Experiments} \label{simulation}

We now apply the DLMPC algorithm to a power system inspired example. After introducing the simplified model, we present simulations under different noise realizations and validate the algorithm correctness and optimal performance by comparing to a centralized algorithm, as well as its ability to achieve constraint satisfaction in the presence of noise. We further demonstrate the scalability of the proposed method by verifying different network and problem parameters (locality, network size and time horizon), and in particular show that runtime stays steady as network size increases. Code needed to replicate these experiments is available at \texttt{\url{https://github.com/unstable-zeros/dl-mpc-sls}}; this code makes use of the SLS toolbox \cite{slstoolbox}, which includes ready-to-use MATLAB implementations of all algorithms presented in this paper and its companion paper \cite{amoalonso_guarantees_2021}.

\subsection{System model}

We begin with a two-dimensional square mesh, where we randomly determine whether each node connects to each of its neighbors with a $40\%$ probability. The expected number of edges is $ 0.8*n*(n-1)$. Each node represents a two-state subsystem that follows linearized and discretized swing dynamics
\begin{equation*}
\begin{bmatrix} \theta(t+1) \\ \omega(t+1) \end{bmatrix}_i = \sum_{j\in\textbf{in}_i(1)}[A]_{ij} \begin{bmatrix} \theta(t) \\ \omega(t) \end{bmatrix}_j + [B]_{i}[u]_i + [w]_i,
\end{equation*}
where $[\theta]_i$, $[\dot{\theta}]_i$, $[u]_i$ are the phase angle deviation, frequency deviation, and control action of the controllable load of bus $i$. The dynamic matrices are 
\[[A]_{ii}=\begin{bmatrix}
   1  & \Delta t \\
  -\frac{k_i}{m_i}\Delta t &  1-\frac{d_i}{m_i}\Delta t
\end{bmatrix}, \ [A]_{ij}=\begin{bmatrix}
   0  & 0 \\
   \frac{k_{ij}}{m_i}\Delta t &  0
\end{bmatrix},\] 
and $[B]_{ii}=\begin{bmatrix} 0 & 1 \end{bmatrix}^\intercal$ for all $i$. Parameters in bus $i$: $m_i^{-1}$ (inertia inverse), $d_i$ (damping) and $k_{ij}$ (coupling) are randomly generated and uniformly distributed between $[0,\ 2]$, $[0.5,\ 1]$, and $[1,\ 1.5]$ respectively. The discretization step is $\Delta t = 0.2$, and $k_i :=  \sum_{j\in\textbf{in}_i(1)} k_{ij}$.

In simulations, we start with a randomly-generated initial condition, and we study three noise scenarios: noise-free, polytopic noise, and locally-bounded noise. Noise follows a uniform distribution, and in the locally-bounded case it is scaled appropriately to meet the local bounds at each patch (see code for details). The baseline parameter values are $d=3,\ T=5,\ N=16$ ($4\times4$ grid). 

\subsection{Optimal performance}

We observe the trajectories of the closed-loop system when using DLMPC for the different noise realizations. We compare these results with the solution to the corresponding centralized MPC problem (using CVX \cite{cvx}). For a randomly chosen initial condition and network topology, we plot the evolution of the two states of subsystem $4$ under the different noise conditions. We use a quadratic cost and polytopic constraints on both angle and frequency, and impose upper and lower bounds.

\begin{figure}[h]
\centering
\includegraphics[width=.7\columnwidth]{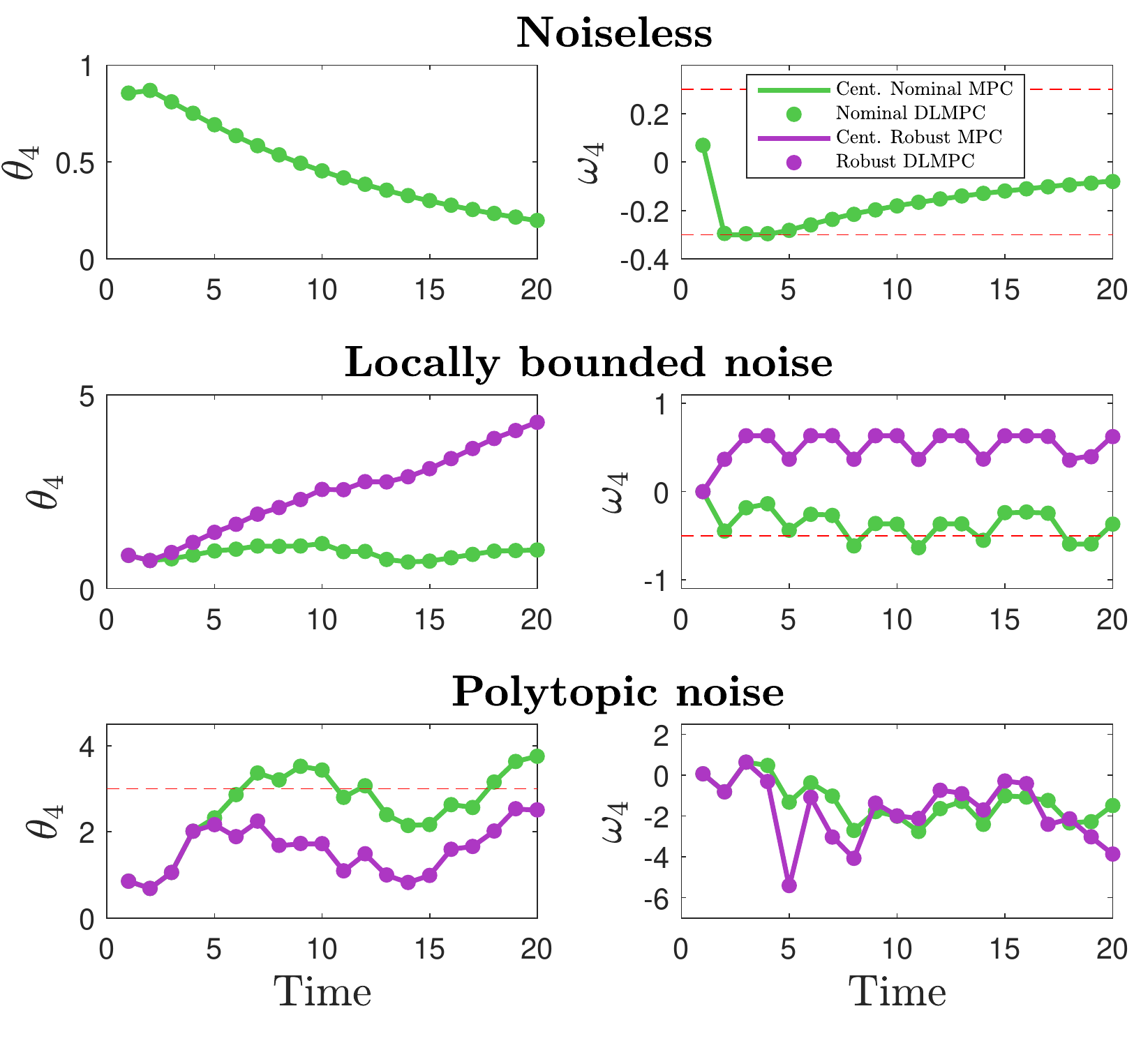}
\caption{Evolution of the states of subsystem $4$ under a nominal (green) and a robust (purple) controller. Red dashed line indicates upper or lower bound respectively. Both DLMPC and centralized MPC controller yield the same result. In the absence of noise, both the nominal and robust controller lead to the same trajectory, whereas in the presence of noise (both locally-bounded and polytopic), the nominal controller leads to bounds violation.}
\label{fig:dynamics}
\end{figure}

Results from simulations are summarized in Fig. \ref{fig:dynamics}. In all cases, the centralized solution coincides with the solution achieved by the DLMPC algorithm, validating the optimality of the algorithm proposed. It is worth noting that in the absence of noise, both nominal and robust DLMPC yield the same result, illustrating that the robust formulation of DLMPC is a generalization of the nominal case, but the former is more computationally efficient. In the noisy cases, the nominal DLMPC leads to bound violations, illustrating the need for a robust approach. It is worth noting the conservativeness of the algorithm in the locally-bounded case is much more apparent than in the polytopic noise case. The reason for this is that in general there does not exist a noise realization that is worst-case for \emph{every} local patch at the same time. This is in contrast with polytopic noise where there does exist a worst-case noise. Therefore locally-bounded noise formulations inherently introduce some conservativeness as they anticipate a worst-case that is not mathematically possible in general. This is because it assumes a worst-case local disturbance at each patch; however, patches overlap, and the worst-case disturbance at some node $i$ is different for each patch containing $i$.

\subsection{Computational complexity}

To asses the scalability of the algorithm, we measure runtime\footnote{Runtime is measured after the first iteration, so that all the iterations for which runtime is measured are warmstarted.} while varying different network and problem parameters: locality $d$, network size $N$\footnote{To increase network size, we vary the size of the grid over $4\times4$ ($32$ states), $6\times6$ ($72$ states), $8\times8$ ($128$ states), and $11\times11$ ($242$ states) grid sizes.} and time horizon $T$. We run $5$ different simulations for each of the parameter combinations, using different realizations of the randomly chosen parameters as to provide a consistent estimation of runtimes. 

We study the scalability of the DLMPC algorithm for each of the different computation strategies presented: noiseless, locally-bounded noise, and polytopic noise. For the sake of comparison, we include an additional computation strategy based on explicit MPC that reduces the overhead in the noiseless case by replacing the optimization solver by a piece-wise solution. Interested readers are referred to \cite{amo_alonso_explicit_2020} for details. We observe the behavior for these four different strategies in Fig. \ref{fig:scalability}, and we note that the computation strategy determines the order of magnitude of the runtime, ranging from $10^{-3}-10^{-2}s$ in the noiseless case -- depending on whether or not explicit solutions are used -- to $10^{-1}s$ in the locally-bounded case, and around $1-10s$ when polytopic noise is considered. This difference is expected, and is explained by the size of the decision variables across different scenarios, i.e. $\mathbf{\tilde{\Phi}}$ has a much larger dimension in the polytopic case than in the noiseless case. Despite this difference in order of magnitude, the trends observed are the same for each of the different scenarios. In particular, runtime barely increases with the size of the network, and the slight increase in runtime -- likely due to the larger coordination needed -- does not seem to be significant and appears to level off for sufficiently large networks. These observations are consistent with those of \cite{conte_computational_2012} where the same trend was noted. In contrast, runtime appears to increase with time horizon, and more notably with locality region size. This is also expected, as according to our                                                                                                                                                                                                                                                                                                                                                                                                                                                                                                                                                                                                                                                                                                                                                                                                                                                                                                                                                                                                                                                                                                                                                                                                                                                                                                                                                                                                                                                                                                                                                                                                                                                                                                                                                                                                                                                                                                                                                                                                                                                                                                                                                                                                                                                                                                                                                                                                                                                                                                                                                                                                                                                                                                                                                                                                                                                                                                                                                                                                                                                                                                                                                                                                                                                                                                                                                                                                                                                                                                                                                                                                                                                                                                                                                      complexity analysis the number of variables in a DLMPC subproblem scales as $O(d^2T^2)$ for the robust cases and as $O(d^2T)$ for the nominal case. It is well-known that a larger time horizon, while providing an improvement in performance, can be detrimental for computational overhead. The same is true for locality size, which is much smaller than the size of the network. Although a larger localized region $d$ can lead to improved performance, as a broader set of subsystems can coordinate their actions directly, it also leads to an increase in computational complexity and communication overhead. Thus by choosing the smallest localization parameter $d$ such that acceptable performance is achieved, the designer can tradeoff between computational complexity and closed-loop performance in a principled way. This further highlights the importance of exploiting the underlying structure of the dynamics, which allow us to enforce locality constraints on the system responses, and consequently, on the controller implementation. 

\begin{figure}[htp]
    \centering
    \includegraphics[width=.7\columnwidth]{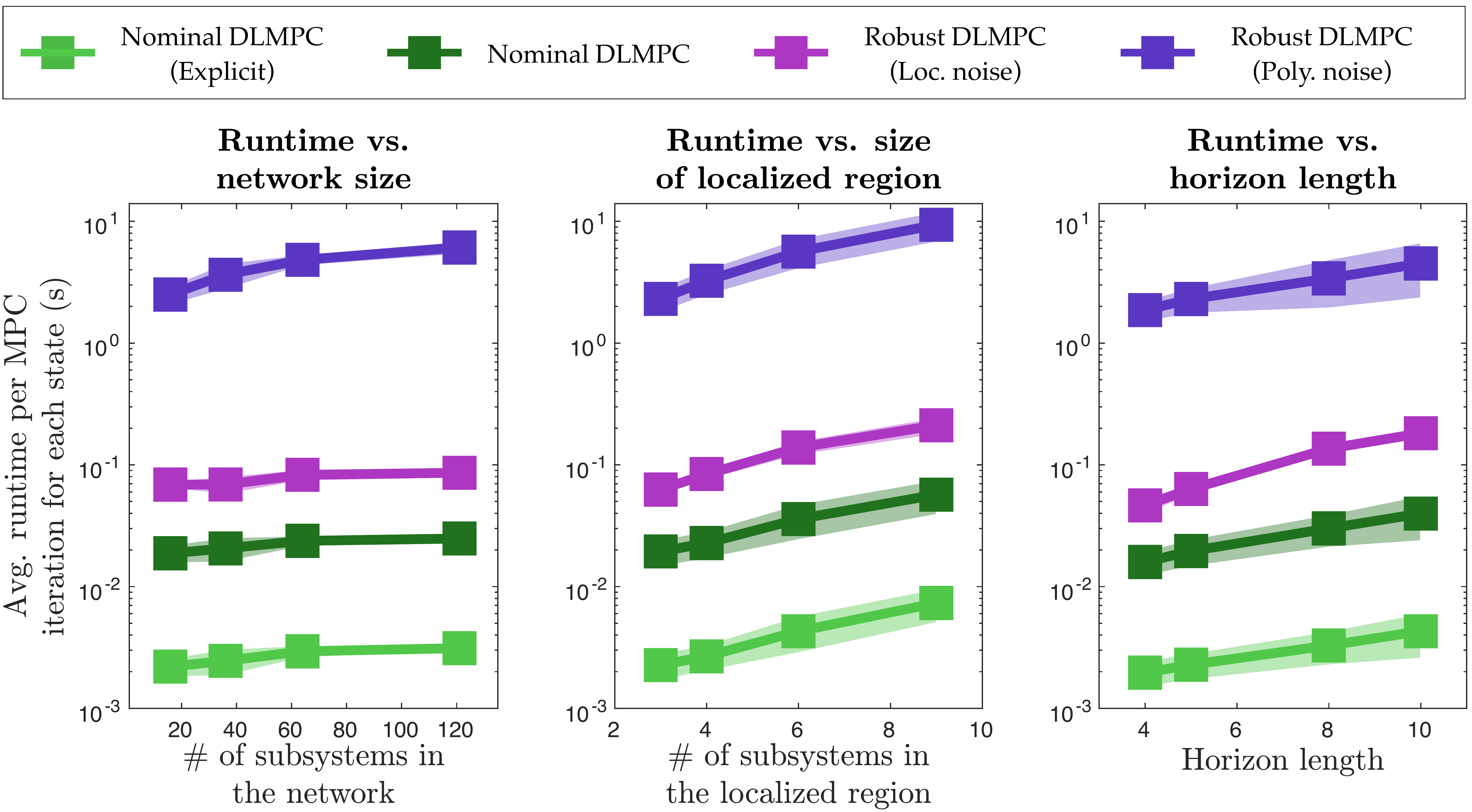}
   \caption{Average runtime per DLMPC iteration with network size (left), locality parameter (middle), and time horizon (right). The lines are the mean values and the shaded areas show the values within one standard deviation. Since computations are parallelized across subsystems, runtime is measured on a subsystem, normalized per state, and averaged out after the MPC controller is finished.}
    \label{fig:scalability}
\end{figure}

\section{Conclusion}

We defined and analyzed a \emph{closed-loop} Distributed and Localized MPC algorithm. By leveraging the SLS framework, we were able to enforce information exchange constraints by imposing locality constraints on the system responses.  We further showed that when locality is combined with mild assumptions on the separability structure of the objective functions and constraints of the problem, an ADMM based solution to the DLMPC subproblems can be implemented that requires only local information exchange and system models, making the approach suitable for large-scale distributed systems. This is the first DMPC algorithm that allows for the distributed synthesis of closed-loop policies. In our companion paper \cite{amoalonso_guarantees_2021}, we provide recursive feasibility and asymptotic stability guarantees for our approach, as well as scalable synthesis and implementations for the terminal constraint set and cost. 

Given that the presented algorithm relies on multiple exchanges of information between the subsystems, how communication loss affects the closed-loop performance of the algorithm is an interesting question. Although a formal analysis is left as future research, the work done in \cite{li_robust_2018} suggests that it would be possible to slightly modify the proposed ADMM-based scheme to make it robust to unreliable communication links. Finally, it is of interest to extend these results to information exchange topologies defined in terms of both sparsity and delays -- while the SLS framework naturally allows for delay to be imposed on the implementation structure of a distributed controller, it is less clear how to incorporate such constraints in a distributed optimization scheme.


\section*{Appendix}

\subsection{SLS based MPC and disturbance based feedback parametrizations}

\begin{proposition}
The disturbance based parametrization defined in Section 4 of \cite{goulart_optimization_2006} is a special case of the SLS parametrization \eqref{eqn:Phis}. 
\end{proposition}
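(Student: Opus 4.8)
The plan is to exhibit an explicit identification between the two parametrizations and then observe that the disturbance-based scheme corresponds to eliminating the variable $\mathbf{\Phi}_x$ from the SLS description via the affine constraint \eqref{eqn:Z_AB}. First I would recall the disturbance-feedback policy of Goulart et al.: each input is written as an affine function of the reconstructed past disturbances, $u_t = v_t + \sum_{s=0}^{t-1} M_{ts} w_s$, where the disturbances are recovered from the dynamics via $w_s = x_{s+1} - Ax_s - Bu_s$. Collecting these over the horizon gives $\mathbf{u} = \mathbf{M}\boldsymbol{\delta} + \mathbf{v}$, with $\mathbf{M}$ strictly block-lower-triangular (encoding causality, since $u_t$ sees only $w_0,\dots,w_{t-1}$) and $\mathbf{v}$ the feedforward signal. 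Writing the augmented disturbance $\mathbf{w} = [x_0^\mathsf{T}\ \boldsymbol{\delta}^\mathsf{T}]^\mathsf{T}$ as in \eqref{eqn:Phis}, I would absorb the feedforward term $\mathbf{v}$ into the block column acting on $x_0$ (using the homogenization argument already invoked for affine policies), so that the Goulart law takes the single form $\mathbf{u} = \mathbf{M}_{\mathrm{aug}}\mathbf{w}$.

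Second, I would substitute this control law into the closed-loop dynamics \eqref{eqn:LTV_system} in signal form, $\mathbf{x} = Z\hat A\mathbf{x} + Z\hat B\mathbf{u} + \mathbf{w}$, and solve for the resulting maps from $\mathbf{w}$ to $\mathbf{x}$ and $\mathbf{u}$. This yields $\mathbf{x} = (I - Z\hat A)^{-1}(I + Z\hat B\,\mathbf{M}_{\mathrm{aug}})\mathbf{w}$ and $\mathbf{u} = \mathbf{M}_{\mathrm{aug}}\mathbf{w}$. Comparing with \eqref{eqn:Phis}, the natural identification is $\mathbf{\Phi}_u := \mathbf{M}_{\mathrm{aug}}$ and $\mathbf{\Phi}_x := (I - Z\hat A)^{-1}(I + Z\hat B\,\mathbf{\Phi}_u)$. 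A direct computation then verifies that this pair satisfies the SLS affine constraint \eqref{eqn:Z_AB}, i.e. $(I - Z\hat A)\mathbf{\Phi}_x - Z\hat B\mathbf{\Phi}_u = I$, and that both maps are block-lower-triangular, so by \Cref{thm: SLS} they constitute a valid SLS system response.

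The final and conceptual step is to read off the ``special case'' relationship: the disturbance-feedback parametrization specifies only $\mathbf{\Phi}_u$ (the map $\mathbf{w}\to\mathbf{u}$) and treats the state trajectory as whatever the dynamics produce, which is exactly the statement that $\mathbf{\Phi}_x$ is the dependent quantity uniquely pinned down by $\mathbf{\Phi}_u$ through \eqref{eqn:Z_AB}. Thus Goulart's scheme is recovered from SLS by eliminating $\mathbf{\Phi}_x$ via the affine constraint, whereas SLS retains $\mathbf{\Phi}_x$ as an explicit decision variable, which is precisely what enables the locality structure of \Cref{def: locality} to be imposed directly. I expect the main obstacle to be the bookkeeping of the triangular structure and the initial condition: I must check that the strictly-lower-triangular $\mathbf{M}$ together with the feedforward $\mathbf{v}$ matches the full block-lower-triangular $\mathbf{\Phi}_u$ (whose first block column $\mathbf{\Phi}_u\{1\}$ encodes the $x_0$-dependence), and confirm that the disturbances reconstructed in Goulart's implementation coincide with the delayed estimate $\hat{\mathbf{w}} = Z\mathbf{w}$ used in the SLS realization \eqref{eqn:implementation}, so that the causality conventions (strict versus non-strict lower triangularity) line up correctly across the two frameworks.
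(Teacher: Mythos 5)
Your proof is correct and in substance matches the paper's own argument: both hinge on the identity $(I - Z\hat{A})\mathbf{x} = Z\hat{B}\mathbf{\Phi}_u\mathbf{w} + \mathbf{w}$ derived from the achievability constraint \eqref{eqn:Z_AB}, with the identification $\mathbf{M}\leftrightarrow\mathbf{\Phi}_u$ (feedforward absorbed into the $x_0$ block) and $\mathbf{\Phi}_x$ playing the role of the dependent quantity fixed by the dynamics. The only distinction is direction and explicitness -- you embed Goulart's policy into SLS by constructing $\mathbf{\Phi}_x = (I - Z\hat{A})^{-1}(I + Z\hat{B}\mathbf{\Phi}_u)$ and checking \eqref{eqn:Z_AB} and block-lower-triangularity, whereas the paper multiplies \eqref{eqn:Z_AB} on the right by $\mathbf{w}$ and eliminates $\mathbf{\Phi}_x$ in favor of a free state variable to land directly on Goulart's problem -- so your triangularity and causality bookkeeping is a slightly more detailed rendering of the same route.
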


\begin{proof}
We start with the statement from Theorem \ref{thm: SLS} and in particular affine constraint  \eqref{eqn:Z_AB}. Multiplying this constraint by $\mathbf{w}$ on the right we obtain:
\begin{equation}\nonumber
        \left[I-Z\hat A\ \ -Z\hat B\right]\left[\begin{array}{c}\mathbf{\Phi}_{x}\\\mathbf{\Phi}_{u}\end{array}\right] \mathbf{w} = \mathbf{w},
\end{equation}
where by definition of $\mathbf\Phi_x$, $\mathbf{x} =: \mathbf\Phi_x \mathbf w$. Hence, by the result of \cite{anderson_system_2019} the SLS MPC subproblem
\begin{equation}\nonumber
\begin{array}{rl}
\underset{\mathbf{x},\mathbf{\Phi}_{u}}{\text{min}} & f(\mathbf{x},\mathbf{\Phi}_{u}\mathbf{w})\\
 \text{s.t.} &  (I - Z\hat{A})\mathbf{x}=Z\hat{B}\mathbf{\Phi}_u \mathbf{w} + \mathbf{w},\, x_0 = x(t),\\
 &\mathbf{x}\in \mathcal{X}, \mathbf{\Phi}_{u}\mathbf{w} \in\mathcal{U} ~\forall \mathbf w \in \mathcal W,
\end{array}
\end{equation}
is equivalent to problem \eqref{eqn:MPC} when restricted to solving over linear-time-varying feedback policies. 

Now, if we consider nominal (disturbance free) cost, i.e. $\mathbf{w} = [x_0^\top,0,\dots,0]^\top$ the problem becomes
\begin{equation}\nonumber
\begin{array}{rl}
\underset{\mathbf{x},\mathbf{\Phi}_{u}}{\text{min}} & f(\mathbf{x},\mathbf{\Phi}_{u}\{1\}x_{0})\\
 \text{s.t.} &  (I - Z\hat{A})\mathbf{x}=Z\hat{B}\mathbf{\Phi}_u\mathbf w + \mathbf w,\, x_0 = x(t),\\
 &\mathbf{x}\in \mathcal{X}, \mathbf{\Phi}_{u}\mathbf w \in\mathcal{U} ~\forall \mathbf w \in \mathcal W,
\end{array}
\end{equation}
Notice that by setting $\mathbf M = \mathbf\Phi_u$ and $\mathbf v =\mathbf w$ , we recover the optimization problem over disturbance feedback policies suggested in Section 4 of \cite{goulart_optimization_2006}.  \end{proof}

An equivalent derivation arises in the Youla based parameterization suggested in \cite{furieri_robust_2017}.  

\subsection{Closed-form solutions for ADMM iterates}

\begin{lemma}\label{lem:closed-form}
Let $z^{\star}(M,v,P,q) := \underset{z}{\text{argmin}}\left\Vert Mz-v \right\Vert_F^2$ s.t. $Pz=q$. Then
$$\begin{bmatrix} z^{\star} \\ \mu^{\star} \end{bmatrix} = \begin{bmatrix}MM^\intercal & P^\intercal \\ P & 0\end{bmatrix}^\dagger \begin{bmatrix} M^\intercal v \\ q \end{bmatrix},$$
where  $\dagger$ denotes pseudo-inverse, is the optimal solution and $\mu^{\star}$ is the corresponding optimal Lagrange multiplier.
\end{lemma}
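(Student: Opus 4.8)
The plan is to recognize this as a standard equality-constrained least-squares problem and derive the stated solution via the Karush-Kuhn-Tucker (KKT) conditions. First I would form the Lagrangian
\begin{equation*}
\mathcal{L}(z,\mu) = \left\Vert Mz - v \right\Vert_F^2 + \mu^\intercal (Pz - q),
\end{equation*}
where $\mu$ is the Lagrange multiplier associated with the affine constraint $Pz = q$. Since the objective is convex (a squared norm) and the constraint is affine, the problem is a convex program for which the KKT conditions are both necessary and sufficient for optimality. This means any $(z^\star,\mu^\star)$ satisfying stationarity and primal feasibility is a global minimizer, so it suffices to exhibit such a pair.

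Next I would compute the stationarity condition by differentiating $\mathcal{L}$ with respect to $z$. Expanding $\left\Vert Mz-v\right\Vert_F^2 = (Mz-v)^\intercal(Mz-v)$ and taking the gradient yields $2M^\intercal(Mz - v) + P^\intercal\mu = 0$, i.e.\ $2M^\intercal M z + P^\intercal \mu = 2 M^\intercal v$. Together with primal feasibility $Pz = q$, these two linear relations can be stacked into a single block linear system. After absorbing the factor of $2$ into a rescaling of the multiplier (which does not affect $z^\star$), the KKT system reads
\begin{equation*}
\begin{bmatrix} M^\intercal M & P^\intercal \\ P & 0 \end{bmatrix} \begin{bmatrix} z \\ \mu \end{bmatrix} = \begin{bmatrix} M^\intercal v \\ q \end{bmatrix}.
\end{equation*}
Solving this system gives the optimal pair, and the solution is exactly the expression in the statement once one accounts for the use of the pseudo-inverse to handle the case where the KKT matrix is not invertible.

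The one genuine subtlety, and the main point to handle carefully, is that the statement uses $MM^\intercal$ rather than $M^\intercal M$ in the top-left block, and uses the pseudo-inverse $\dagger$ rather than an ordinary inverse. I would therefore argue that when $M$ has full column rank (so $M^\intercal M$ is invertible) the KKT matrix is nonsingular and the pseudo-inverse coincides with the inverse, recovering the classical formula directly; the discrepancy between $MM^\intercal$ and $M^\intercal M$ I would treat as dimensional bookkeeping tied to how $z$ and $v$ are shaped in the ADMM iterates, where $M = [\tilde H]_{i_c}$ acts on the column-partitioned variables. The hard part is thus not the optimization theory, which is routine, but ensuring the block structure and the pseudo-inverse formulation are stated consistently with the objects appearing in \eqref{eqn:closed_form}; I would verify that the pseudo-inverse selects the minimum-norm solution among all optimal $(z,\mu)$ whenever the system is underdetermined, which is the standard convention guaranteeing a well-defined output for the map $z^\star(\cdot)$.
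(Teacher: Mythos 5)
Your proof is correct and follows essentially the same route as the paper: the paper's proof likewise applies the KKT conditions, writing the stationarity condition $M^\intercal M z^{\star}-M^\intercal v + P^\intercal \mu^{\star} = 0$ together with primal feasibility $Pz^{\star} = q$ and solving the resulting stacked linear system. Your flag about the top-left block is well taken --- the paper's own derivation produces $M^\intercal M$, not the $MM^\intercal$ appearing in the lemma statement, so the discrepancy you noticed is a typo in the statement rather than a gap in your argument.
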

\begin{proof}
The proof follows directly from applying the KKT conditions to the optimization problem. By the stationarity condition, $M^\intercal M z^{\star}-M^\intercal v + P^\intercal \mu^{\star} = 0$, where $z^{\star}$ is the solution to the optimization problem and $\mu^{\star}$ the optimal Lagrange multiplier vector. From the primal feasibility condition, $Pz^{\star} = q$. Hence, $z^{\star}$ and $\mu^{\star}$ are computed as the solution to this system of two equations. 
\end{proof}

\section*{Acknowledgements}
Carmen Amo Alonso is partially supported by a Caltech/Amazon AI4Science fellowship. Jing Shuang (Lisa) Li is supported in part by a postgraduate scholarship from the Natural Sciences and Engineering Research Council of Canada [NSERC PGSD3-557385-2021]. James Anderson is partially supported by NSF CAREER ECCS-2144634 and DOE award DE-SC0022234. Nikolai Matni is supported in part by NSF awards CPS-2038873 and CAREER award ECCS-2045834, and a Google Research Scholar award.

\bibliographystyle{IEEEtran}
\bibliography{references}

\end{document}